\numberwithin{equation}{section}
\def\R{{\mathbb R}}
\def\Om{{\Omega}}
\def\lam{{\lambda}}
\def\uk{{\overline{u_k}}}
\def\rh{{\overline{\rho}}}
\def\al{{\overline{\alpha}}}
\def\ep{{\epsilon}}
\def\ga{{\gamma_{\epsilon}}}
\def\p{{\partial}}
\def\no{{\nonumber}}
\newcounter{marnote}
\begin{document}
\newtheorem{lem}{Lemma}[section]
\newtheorem{rem}{Remark}
\newtheorem{question}{Question}
\newtheorem{prop}{Proposition}
\newtheorem{cor}{Corollary}
\newtheorem{thm}{Theorem}[section]
\newtheorem{definition}{Definition}
\newtheorem{openproblem}{Open Problem}
\newtheorem{conjecture}{Conjecture}

\newenvironment{dedication}
  {
   \vspace*{\stretch{.2}}
   \itshape             
  }
  {
   \vspace{\stretch{1}} 
  }

\title
{A singular Kazdan-Warner problem on a compact Riemann surface}
\author{
Xiaobao Zhu\thanks{Partially
 supported by the National Science Foundation of China
        (Grant Nos.
11171347 and 11401575).
}
\thanks{Email:  zhuxiaobao@ruc.edu.cn}
\\
School of Mathematics\\
Renmin University of China\\
Beijing 100872, P. R. China\\
}
\date{ }
\maketitle


\begin{abstract}
Let $(M,g)$ be a compact Riemann surface with unit area,
 $h\in C^{\infty}(M)$ a function which is positive somewhere, $\rho>0$, $p_i\in M$ and $\alpha_i\in(-1,+\infty)$ for $i=1,\cdots,\ell$, we consider the mean field equation
\begin{align*}
\Delta v + 4\pi\sum_{i=1}^{\ell}\alpha_i\left(1-\delta_{p_i}\right) = \rho\left(1-\frac{he^v}{\int_Mhe^vd\mu}\right),
\end{align*}
on $M$, where $\Delta$ and $d\mu$ are the Laplace-Beltrami operator and the area element of $(M,g)$ respectively. Using variational method and blowup analysis, we
prove some existence results in the critical case $\rho=8\pi(1+\min\{0,\min_{1\leq i\leq\ell}\alpha_i\})$. These results can be seen as partial generalizations of works of
Chen-Li (J. Geom. Anal. 1: 359--372, 1991),
Ding-Jost-Li-Wang (Asian J. Math. 1: 230--248, 1997),
Mancini (J. Geom. Anal. 26: 1202--1230, 2016),
Yang-Zhu (Proc. Amer. Math. Soc. 145: 3953--3959, 2017),
Sun-Zhu (arXiv:2012.12840) and
Zhu (arXiv:2212.09943). Among other things, we prove that the blowup (if happens) must be at the point where the conical angle is the smallest one and $h$ is positive, this is the most important contribution of our paper.
\end{abstract}

\setcounter {section} {0}

\section{Introduction}

Let $(M,g)$ be a compact Riemann surface with unit area, $h$ a smooth function on $M$ which is positive somewhere
 and $\rho>0$ a parameter. In this paper, we consider the singular mean field equation
\begin{align}\label{eq-v}
\Delta v + 4\pi\sum_{i=1}^{\ell}\alpha_i\left(1-\delta_{p_i}\right) = \rho\left(1-\frac{he^v}{\int_Mhe^vd\mu}\right),
\end{align}
where $\Delta$ and $d\mu$ are the Laplace-Beltrami operator and the area element of $(M,g)$ respectively, $p_i\in M$ are given distinct points,
$\alpha_i\in(-1,+\infty)$ and $\delta_{p_i}$ denotes the Dirac measure with pole at $p_i$ for $i=1,\cdots,\ell$.

Since \eqref{eq-v} is invariant under adding a constant, one can assume $\int_M he^{v}d\mu=1$.
When $(M,g)$ is the standard sphere with constant Gauss curvature $4\pi$ (since we have assumed the area of $M$ is unit)
and $\rho=8\pi+4\pi\sum_{i=1}^{\ell}\alpha_i$, the metric $e^vg$ has Gauss curvature
$(4\pi+2\pi\sum_{i=1}^{\ell}\alpha_i)h$ on
$M\setminus\{p_1,\cdots,p_{\ell}\}$ and conical angle $2\pi(1+\alpha_i)$ at $p_i$ for each $i=1,\cdots,\ell$. This is the singular version of
``Nirenberg problem". We refer the reader to \cite{Tro1989, Tro1991, CL-91-JGA, LT92, UY2000, Er2004, CWWX2015, Er2020, De2018, DDI2018, Z2020, Ka2023} for the development
of this topic.
Eq. \eqref{eq-v} also appears in theoretical physics when people describes the models like Abelian Chern-Simons vortices, the reader could find this interpretation in
\cite{HKP1990, JW1990, CY1995, Ta1996, DJLW}. For the study on Eq. \eqref{eq-v} when $\rho\neq\rh$, we refer the reader to \cite{BGJM2019, BT, MR2011}. Besides, we refer
three nice
survey papers \cite{Li08, Ta2010, Lai2016}.

Let $G_p$ be the Green function at $p$ which satisfies
\begin{align}\label{eq-green}
\begin{cases}
\Delta G_p = 1-\delta_p,\\
\int_M G_p d\mu = 0.
\end{cases}
\end{align}
In a normal coordinate system around $p$ we assume that
\begin{align*}
G_p(x)=-\frac{1}{2\pi}\log r +A(p)+\sigma(x),
\end{align*}
where $r(x)=\text{dist}(x,p)$ and $\sigma(x)=O(r)$ as $r\to0$.
By the change of variables
$$u = v+4\pi\sum_{i=1}^{\ell}\alpha_i G_{p_i} := v+h_{\ell},$$
we can transform Eq. (\ref{eq-v}) with $\int_M he^v d\mu=1$ into
\begin{align}\label{eq-u}
\Delta u = \rho\left(1-he^{-h_{\ell}}e^u\right).
\end{align}

To study the existence of Eq. \eqref{eq-u}, one likes to pursue a variational method. Namely, consider
\begin{align*}
J_{\rho}(u)=\frac{1}{2}\int_{M}|\nabla u|^2d\mu + \rho \int_{M}ud\mu
\end{align*}
in the Hilbert space
\begin{align*}
\mathcal{H}=\left\{u\in H^1(M):~\int_M he^ue^{-h_\ell}d\mu=1\right\}.
\end{align*}
Since $h$ is positive somewhere, $\mathcal{H}\neq\emptyset$. By a simple calculation, one knows critical points of $J_{\rho}$ in $\mathcal{H}$ are solutions of \eqref{eq-u}.
Therefore, to solve Eq. \eqref{eq-u}, we can find critical points of $J_\rho$ in $\mathcal{H}$. Let us recall the Moser-Trudinger inequality for surfaces with conical singularities, which has strong relationship with finding critical points for $J_{\rho}$. This inequality was firstly built by Troyanov \cite{Tro1991} for any subcritical case and then sharpened by Chen \cite{Chen1990}. Precisely, there exists a constant $C$
which depends only on $(M,g)$, such that for any $u\in H^1(M)$ with $\int_M |\nabla u|^2 d\mu\leq1$ and $\int_M u e^{-h_{\ell}}d\mu=0$,
\begin{align}\label{Chen-Troyanov inequality}
\int_M e^{4\pi(1+\al)u^2}e^{-h_{\ell}}d\mu\leq C,
\end{align}
where $\al=\min\left\{0,\min_{1\leq i\leq\ell}{\alpha_i}\right\}$. We refer the reader to \cite{Zhu19} for a more general version.
A direct consequence of \eqref{Chen-Troyanov inequality} is
\begin{align}\label{ineq-chentro}
\log\int_M e^{u}e^{-h_{\ell}} d\mu \leq \frac{1}{16\pi(1+\al)}\int_M|\nabla u|^2d\mu + \int_{M}ue^{-h_\ell}d\mu + C.
\end{align}
To study $J_\rho$, a more convenient inequality compared with \eqref{ineq-chentro} was derived by the author \cite{Zhu17}. In fact, we replaced the term $\int_M ue^{-h_\ell}d\mu$ with $\int_M u d\mu$ on the right-hand side of \eqref{ineq-chentro} and obtained
\begin{align}\label{ineq-zhu}
\log\int_M e^{u}e^{-h_{\ell}} d\mu \leq \frac{1}{16\pi(1+\al)}\int_M|\nabla u|^2d\mu + \int_{M}ud\mu + C.
\end{align}
Note that  all the coefficients in \eqref{Chen-Troyanov inequality}-(\ref{ineq-zhu}) are sharp.
Based on this fact, $\rho=8\pi(1+\al):=\rh$ is the critical case in \eqref{eq-u}. When $\rho<\rh$, $J_\rho$ is coercive
in $\mathcal{H}$ and hence it attains its infimum by the standard variational principle (c.f. for example, Theorem 1.2 in \cite{Struwe}). However, when $\rho=\rh$,
the situation becomes more subtle, one just knows $J_\rh$ is bounded from below in $\mathcal{H}$ by \eqref{ineq-zhu}.

When there is no singular source, that is $h_\ell=0,\, \al=0$ and $\rh=8\pi$.
Kazdan and Warner \cite{KW74} asked, under what kind of conditions on $h$, the equation
\begin{align}\label{eq-kw}
 \Delta u = 8\pi-8\pi he^u
\end{align}
has a solution. In the literal, people calls it as \emph{Kazdan-Warner problem}.

In the celebrated paper \cite{DJLW}, Ding, Jost, Li and Wang firstly attacked Kazdan-Warner problem successfully.
If $h$ is positive and $J_{8\pi}$ has no minimum, they proved
\begin{align*}
\inf_{u\in H^1(M)}J_{8\pi} \geq -8\pi-8\pi\log\pi-8\pi\max_{p\in M}\left(4\pi A(p)+\log h(p)\right):=\Lambda_{8\pi}.
\end{align*}
After that, they constructed a blowup sequence $\phi_\ep$ and proved that $J_{8\pi}(\phi_\ep)$ is smaller than $\Lambda_{8\pi}$ for sufficiently small $\ep>0$
under the condition
\begin{align}\label{cond-djlw}
\Delta \log h(p_0) -2K(p_0)+8\pi > 0,
\end{align}
where $p_0$ is the maximum point of $4\pi A(p)+\log h(p)$ on $M$ and $K$ is the Gauss curvature of $(M,g)$. Therefore, under \eqref{cond-djlw}, $J_{8\pi}$ has a minimum
and Eq. \eqref{eq-kw} has a solution. In the following, we call \eqref{cond-djlw} as \emph{Ding-Jost-Li-Wang condition}. Twenty years later,
Yang and the author
\cite{YZ17} generalized this existence result to the case $h\geq0,\,\not\equiv0$, they mainly
excluded the situation that blowup happens at zero point of $h$. Recently, this result was generalized to the case which permits $h$ changing signs.
This were done using variational method first by Sun and Zhu \cite{SZ2022+} and then by the author \cite{Zhu2022+} with a different argument.
The other successful method to study Eq. \eqref{eq-kw}
 is the flow method, we refer the reader to \cite{Ca15}, \cite{LZ19}, \cite{SZ2021}, \cite{WY2022} and \cite{LX2022}.

In this paper, we shall follow Ding-Jost-Li-Wang's method to study Eq. \eqref{eq-u} when there are singular sources and in the critical case $\rho=\rh$.
We call it as the \emph{singular Kazdan-Warner problem}.
Mainly, we remove the positivity restriction on $h$ and just assume $h$ is positive somewhere. Pioneer works were done by Chen-Li \cite{CL-91-JGA} when $M=S^2$
and Mancini \cite{Mancini} for general surfaces and positive $h$.

We consider the perturbed functional $J_{\rho_k}$ when $\rho_k\uparrow \bar\rho$ strictly. Because of (\ref{ineq-zhu}), $J_{\rho_k}$ is coercive in the Hilbert space $\mathcal{H}$. Then by Theorem 1.2 in
\cite{Struwe}, $J_{\rho_k}$ attains its infimum at some $u_k\in\mathcal{H}$. By a direct calculation, we have
\begin{align}\label{eq-uk}
\Delta u_k = \rho_k\left(1-he^{-h_{\ell}}e^{u_k}\right).
\end{align}
We define the \emph{conical singularity} at $p\in M$ as
\begin{align*}
\alpha(p)=
\begin{cases}
\alpha_i,~~~&\text{if~}p=p_i~\text{for~some~}i=1,\cdots,\ell,\\
0,~~~&\text{otherwise.}
\end{cases}
\end{align*}
Correspondingly, the \emph{conical angle} at $p$ is $2\pi(1+\alpha(p))$.
Now we are prepared to state our results.
\begin{thm}\label{thm-zhu-1}
Let $(M,g)$ be a compact Riemann surface with unit area. Denote $M_+=\{x\in M: h(x)>0\}$. If the minimizing sequence $u_k$ of $J_{\rh}$ does not converge in $H^1(M)$, then
\begin{align*}
\inf_{H^1(M)}J_{\rh}(u)=&-\rh\left(1+\log\frac{\pi}{1+\al}\right)\nonumber\\
    &-\rh\max_{p\in M_+, \alpha(p)=\al}\left(4\pi A(p)+\log\left(h(p)\prod_{1\leq i\leq\ell,p_i\neq p}e^{-4\pi\alpha_i G_{p_i}(p)}\right)\right).
\end{align*}
\end{thm}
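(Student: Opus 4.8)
The plan is to follow the Ding--Jost--Li--Wang strategy in its blow-up form, adapted to the conical setting. Since $u_k$ does not converge in $H^1(M)$, the $u_k$ must concentrate; by the standard Brezis--Merle/blow-up analysis for mean field equations (applied to \eqref{eq-uk} with $\rho_k\uparrow\rh$) the measures $\rho_k h e^{-h_\ell}e^{u_k}\,d\mu$ converge to a sum of Dirac masses, and because $\rho_k\to\rh=8\pi(1+\al)$ is exactly the first critical threshold, there is a single blow-up point $x_0$ carrying the whole mass $\rh$. The first task is to pin down where $x_0$ can be: I would run the Pohozaev identity around $x_0$ on \eqref{eq-uk} to see that the local mass $8\pi(1+\alpha(x_0))$ must equal the total mass $8\pi(1+\al)$, forcing $\alpha(x_0)=\al$, i.e. the cone angle at $x_0$ is the smallest one; and a separate argument (as in Yang--Zhu \cite{YZ17}) excludes $x_0\in M\setminus M_+$, so $x_0\in M_+$ with $\alpha(x_0)=\al$. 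This is the ``most important contribution'' advertised in the abstract, and it is the conceptual heart of the lower-bound half of the theorem.

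Next I would carry out the quantitative blow-up expansion to extract the exact value of $\inf J_{\rh}$. Writing $\lambda_k=\max_M u_k\to+\infty$ attained near $x_0$ and rescaling, the profile converges to the standard bubble solving $\Delta U = -\,|x|^{2\al}e^{U}$ (the $\al$-singular Liouville equation) on $\R^2$, whose total curvature is $8\pi(1+\al)$; one knows its explicit form and its energy. Combining the rescaled energy near $x_0$ with the Green-function representation of $u_k$ away from $x_0$ — here $u_k \to \rh\,G_{x_0}$ in $C^1_{loc}(M\setminus\{x_0\})$ — and keeping careful track of the constant $A(x_0)$ from the expansion $G_{x_0}=-\frac{1}{2\pi}\log r + A(x_0)+\sigma$, together with the regular part $\sum_{p_i\neq x_0} 4\pi\alpha_i G_{p_i}(x_0)$ coming from $h_\ell$ evaluated at $x_0$ and the factor $h(x_0)$, yields
\[
J_{\rho_k}(u_k) = -\rh\left(1+\log\frac{\pi}{1+\al}\right) - \rh\Big(4\pi A(x_0)+\log\big(h(x_0)\textstyle\prod_{p_i\neq x_0}e^{-4\pi\alpha_i G_{p_i}(x_0)}\big)\Big) + o(1).
\]
Since $u_k$ minimizes $J_{\rho_k}$ and $\rho_k\uparrow\rh$, a monotonicity/continuity argument in $\rho$ (using that $\inf J_\rho$ is non-increasing and left-continuous) passes this to $\inf_{H^1}J_{\rh}$, giving the ``$=$'' once we know the optimal $x_0$ is chosen to maximize the bracket — which is forced because the construction of test functions (the $\phi_\ep$ of Ding--Jost--Li--Wang, recentered at any admissible $p$ with $\alpha(p)=\al$, $p\in M_+$) shows $\inf J_{\rh}$ is $\le$ the right-hand side for every such $p$, hence $\le$ the max, while blow-up analysis gives $\ge$ the value at the actual $x_0\le$ the max.

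The main obstacle I anticipate is the precise bookkeeping of constants in the presence of the conical singularities: the local model at $x_0$ is not the smooth bubble but the $\al$-bubble, so its energy and the matching with the outer Green function produce the $\log\frac{\pi}{1+\al}$ term rather than $\log\pi$, and one must make sure the singular weight $e^{-h_\ell}$ contributes exactly the factor $\prod_{p_i\neq x_0}e^{-4\pi\alpha_i G_{p_i}(x_0)}$ — with no spurious contribution from the singularity sitting at $x_0$ itself when $x_0=p_i$ for some $i$ with $\alpha_i=\al$. Handling that self-interaction term correctly, and verifying that the Pohozaev/local-mass computation genuinely rules out $\alpha(x_0)>\al$ (i.e. that a smaller cone angle cannot ``absorb'' the mass), are the two delicate points; everything else is a careful but routine adaptation of \cite{DJLW}, \cite{YZ17}, \cite{Mancini} and the author's earlier work.
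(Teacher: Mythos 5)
Your overall architecture is the same as the paper's: the lower bound for $\inf J_{\rh}$ comes from a blow-up expansion of $J_{\rho_k}(u_k)$ (outer Green-function region, rescaled $\al$-bubble, and a neck matching the two), and the matching upper bound comes from the Ding--Jost--Li--Wang test functions $\phi_\ep$ recentered at the maximizing admissible point; the paper assembles exactly these two halves as \eqref{est_2} and \eqref{bound}. Your final two-sided logic (blow-up gives $\geq$ the value at the actual concentration point, which is $\leq$ the max; test functions give $\leq$ the value at every admissible point, hence $\leq$ the max) is the correct one, and your worry about the self-interaction term when $x_0$ coincides with some $p_i$ is well placed --- the paper's $H(p)$ indeed strips out the Green function of the singularity sitting at $p$ itself.

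There is, however, one step where your sketch leans on machinery that is not available here, and it is precisely the point the paper advertises as its main contribution. You invoke ``standard Brezis--Merle/blow-up analysis'' and mass quantization to conclude that $\rho_k h e^{-h_\ell}e^{u_k}d\mu$ concentrates at a single point carrying the whole mass $\rh$. When $h$ changes sign this is not standard: the measures are signed, Li--Shafrir-type quantization requires the weight to be positive near the concentration point, and a priori one could imagine two concentration regions (one where $h>0$, one where $h<0$) partially cancelling. The paper circumvents this by defining the blow-up set with $|h|e^{-h_\ell+u_k}$ and ruling out two or more blow-up points via the improved Chen--Li Moser--Trudinger inequality (Lemma \ref{lem-chenli} applied to $e^{-h_\ell+u_k}$, yielding the contradiction in Lemma \ref{lem-blowupset}); only \emph{after} single-point concentration is established does it deduce $h(p)>0$ from $h(p)\lim\int_{B_r(p)}e^{-h_\ell+u_k}d\mu=1$ (Lemma \ref{lem-hp>0}). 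Your Pohozaev argument for $\alpha(x_0)=\al$ is then legitimate since $h>0$ locally, but it cannot be run before positivity and uniqueness of the blow-up point are known. A smaller overstatement: the capacity/harmonic-comparison neck estimate gives only a lower bound on $\int|\nabla u_k|^2$ over the annulus, so your claimed asymptotic \emph{equality} $J_{\rho_k}(u_k)=\Lambda_{\rh}+o(1)$ does not follow from the blow-up analysis alone; fortunately your final argument only uses the inequality, so this does not damage the proof.
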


Concerning the existence result, we have
\begin{thm}\label{thm-zhu-2}
Let $(M,g)$ be a compact Riemann surface with unit area. Denote $M_+=\{x\in M: h(x)>0\}$.
Suppose $h\in C^{\infty}(M)$ is positive somewhere. Then Eq. (\ref{eq-u}) has a solution provided one of the following conditions holds:

$(i)$ $\{p\in M_+: \alpha(p)=\al\}=\emptyset$;

$(ii)$ $\{p\in M_+: \alpha(p)=\al\}\neq\emptyset$ and
\begin{align*}
\inf_{\mathcal{H}}J_{\overline{\rho}}
<&-\rh\left(1+\log\frac{\pi}{1+\al}\right)\nonumber\\
    &-\rh\max_{p\in M_+, \alpha(p)=\al}\left(4\pi A(p)+\log\left(h(p)\prod_{1\leq i\leq\ell,p_i\neq p}e^{-4\pi\alpha_i G_{p_i}(p)}\right)\right)\\
:=&\Lambda_{\rh}.
\end{align*}
\end{thm}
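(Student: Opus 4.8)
The plan is to run the standard direct-method/blowup dichotomy on the minimizing sequence $u_k$ of $J_{\rho_k}$ with $\rho_k \uparrow \rh$. First I would establish the alternative: either $\|u_k\|_{H^1}$ stays bounded, or it blows up. In the bounded case, by weak compactness and the compactness of the Moser-Trudinger embedding coming from \eqref{Chen-Troyanov inequality} (applied strictly below the critical threshold, which is available because $\rho_k < \rh$), one extracts a subsequence converging weakly in $H^1(M)$ and strongly in the relevant Orlicz sense to some $u_\infty \in \mathcal H$; lower semicontinuity of the Dirichlet energy together with the convergence of $\int_M u_k\, d\mu$ and of the constraint gives that $u_\infty$ minimizes $J_\rh$ over $\mathcal H$, hence is a critical point and solves \eqref{eq-u}. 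So the only obstruction to existence is genuine blowup. Under hypothesis $(i)$, $\{p \in M_+ : \alpha(p) = \al\} = \emptyset$, Theorem~\ref{thm-zhu-1} would be vacuous/contradictory if blowup occurred (the right-hand side would be a maximum over the empty set, i.e. $-\infty$, forcing $\inf J_\rh = -\infty$, contradicting the lower bound from \eqref{ineq-zhu}); hence blowup cannot occur and existence follows. Under hypothesis $(ii)$, the strict inequality $\inf_{\mathcal H} J_\rh < \Lambda_\rh$ directly contradicts the identity in Theorem~\ref{thm-zhu-1}, so again blowup is impossible and $u_k \to u_\infty$ in $H^1$, giving a solution of \eqref{eq-u}.

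More precisely, I would argue by contradiction: suppose Eq.~\eqref{eq-u} has no solution, equivalently $J_\rh$ has no minimizer in $\mathcal H$. Then the minimizing sequence $u_k$ cannot converge in $H^1(M)$ (if it did, the limit would be a minimizer by the semicontinuity argument above), so the hypothesis of Theorem~\ref{thm-zhu-1} is satisfied and we obtain
\begin{align*}
\inf_{H^1(M)} J_\rh = \Lambda_\rh := -\rh\Bigl(1 + \log\tfrac{\pi}{1+\al}\Bigr) - \rh \max_{p \in M_+,\, \alpha(p) = \al}\Bigl(4\pi A(p) + \log\bigl(h(p)\textstyle\prod_{1\le i\le\ell,\, p_i\neq p} e^{-4\pi\alpha_i G_{p_i}(p)}\bigr)\Bigr).
\end{align*}
Note $\inf_{H^1(M)} J_\rh = \inf_{\mathcal H} J_\rh$ since the functional and constraint are set up so that the unconstrained infimum over $H^1$ equals the constrained one (this is the usual reduction: given any $u \in H^1$, translating by a constant puts it in $\mathcal H$ without increasing — in fact leaving invariant after optimizing the constant — the relevant quantity; more carefully one uses that $J_\rh$ descends to the quotient and the scaling-invariant functional $\frac12\int|\nabla u|^2 + \rh\int u - \rh\log\int he^{-h_\ell}e^u$ agrees with $J_\rh$ on $\mathcal H$). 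Under $(i)$ the maximum is over the empty set so $\Lambda_\rh = -\infty$, contradicting that $J_\rh$ is bounded below by \eqref{ineq-zhu}. Under $(ii)$ we are given $\inf_{\mathcal H} J_\rh < \Lambda_\rh$, directly contradicting the displayed identity. In both cases the assumption that no minimizer exists is false, so $J_\rh$ attains its infimum and Eq.~\eqref{eq-u} has a solution.

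The main obstacle in this argument is entirely contained in Theorem~\ref{thm-zhu-1}, which I am permitted to assume; the deduction of Theorem~\ref{thm-zhu-2} from it is then essentially formal. Within that deduction, the one point requiring genuine care is the non-blowup (convergent) case: one must verify that when $\|u_k\|_{H^1}$ is bounded, the limit actually lies in the constraint set $\mathcal H$ — i.e. that $\int_M h e^{u_\infty} e^{-h_\ell}\,d\mu = 1$ rather than something smaller — and this is where the sub-threshold Moser-Trudinger inequality \eqref{Chen-Troyanov inequality} is essential, since it upgrades weak $H^1$ convergence to $L^1$ convergence of $h e^{u_k} e^{-h_\ell}$ (no mass escapes because the exponential integrals are uniformly bounded slightly above the critical exponent along the sequence with $\rho_k < \rh$). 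Once that is in place, lower semicontinuity of $\int|\nabla u|^2$, continuity of $u \mapsto \int u\, d\mu$ under weak convergence, and the fact that $J_{\rho_k}(u_k) = \inf_{\mathcal H} J_{\rho_k} \to \inf_{\mathcal H} J_\rh$ (using continuity of the infima in $\rho$, which follows from \eqref{ineq-zhu} and a standard comparison argument) combine to show $J_\rh(u_\infty) = \inf_{\mathcal H} J_\rh$, completing the case analysis.
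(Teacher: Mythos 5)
Your proposal is correct and follows essentially the same route as the paper: the paper's own proof of Theorem \ref{thm-zhu-2} is precisely the dichotomy you describe, namely that blowup of the minimizing sequence $u_k$ would force the identity of Theorem \ref{thm-zhu-1} (whose proof locates the blowup point in $\{p\in M_+:\alpha(p)=\al\}$), which is impossible under $(i)$ because that set is empty and under $(ii)$ because of the assumed strict inequality, so $u_k$ converges and $J_{\rh}$ attains its infimum. Your additional care about the constrained versus unconstrained infimum and the compactness in the non-blowup case fills in details the paper leaves implicit but does not change the argument.
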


We organize our paper as follows: The introduction and main results are presented in Section 1; In Section 2, we collect some useful lemmas which will be used;
We derive the explicit lower bound of $J_{\rh}$ when $(u_k)$ blows up in Section 3 and construct a blowup sequence in Section 4; In the last section, we complete
the proofs of our main results. Throughout the whole paper, the constant $C$ is varying from line to line and even in the same line, we do not distinguish sequence
and its subsequences since we just consider the existence result.

\section{Some lemmas}

In this section, we would like to present some useful results which will be used in the next section, where we will calculate the explicit lower bound of $J_{\rho_k}$
when $(u_k)$ blows up.

\begin{lem}[Theorem 2.1, \cite{CL-91-JGA}]\label{lem-chenli}
Let $(M,g)$ be a compact Riemann surface with unit area, $\Omega_1$ and $\Omega_2$ be two subsets of $M$ such that $dist(\Omega_1, \Omega_2)\geq\epsilon_0>0$. Assume
$\gamma_0\in(0,\frac{1}{2}]$ is a fixed number. Then for any $\epsilon>0$, there is a constant $C=C(\epsilon_0,\gamma_0,\epsilon)$ such that the inequality
\begin{align*}
\log\int_{M}e^{-h_\ell}e^ud\mu \leq \left(\frac{1}{32\pi(1+\al)}+\epsilon\right)\int_{M}|\nabla u|^2d\mu + \int_M u d\mu+C
\end{align*}
holds for all $u\in H^1(M)$ satisfying
\begin{align*}
\frac{\int_{\Omega_1}e^{-h_\ell}e^ud\mu}{\int_{M}e^{-h_\ell}e^ud\mu}\geq\gamma_0,~~\frac{\int_{\Omega_2}e^{-h_\ell}e^ud\mu}{\int_{M}e^{-h_\ell}e^ud\mu}\geq\gamma_0.
\end{align*}
\end{lem}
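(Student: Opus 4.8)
\noindent\textit{Proof strategy.} The inequality says that the sharp Moser--Trudinger constant $\frac{1}{16\pi(1+\al)}$ of \eqref{ineq-zhu} is \emph{halved} as soon as the measure $e^{-h_\ell}e^{u}\,d\mu$ is forced to keep a definite fraction of its mass near each of two regions that lie at positive distance. So the plan is to localize near $\Omega_1$ and near $\Omega_2$, apply \eqref{ineq-zhu} on each piece, and add the two estimates; the factor $2$ comes precisely from the final division by $2$. Since both sides of the asserted inequality are invariant under $u\mapsto u+c$, we first normalize $\int_M u\,d\mu=0$ and write $I=\int_M e^{-h_\ell}e^{u}\,d\mu$.

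Because $\dist(\Omega_1,\Omega_2)\ge\epsilon_0$, we may pick $\eta_1,\eta_2\in C^\infty(M)$ with $0\le\eta_i\le1$, $\eta_i\equiv1$ on $\Omega_i$, $\mathrm{supp}\,\eta_i\subset U_i:=\{x:\dist(x,\Omega_i)<\epsilon_0/3\}$, $|\nabla\eta_i|\le C/\epsilon_0$, and $\mathrm{supp}\,\eta_1\cap\mathrm{supp}\,\eta_2=\emptyset$, so that $\eta_1^2+\eta_2^2\le1$ on $M$. Applying \eqref{ineq-zhu} to $\eta_i u$ and using $\eta_i\equiv1$ on $\Omega_i$ together with the hypothesis $\int_{\Omega_i}e^{-h_\ell}e^{u}\,d\mu\ge\gamma_0 I$ gives, for $i=1,2$,
\[
\log(\gamma_0 I)\le\frac{1}{16\pi(1+\al)}\,\|\nabla(\eta_i u)\|_2^2+\int_M\eta_i u\,d\mu+C .
\]
An integration by parts yields $\|\nabla(\eta_1u)\|_2^2+\|\nabla(\eta_2u)\|_2^2=\int_M(\eta_1^2+\eta_2^2)|\nabla u|^2\,d\mu-\int_M u^2(\eta_1\Delta\eta_1+\eta_2\Delta\eta_2)\,d\mu\le\|\nabla u\|_2^2+\mathcal{E}(u)$, where $\mathcal{E}(u):=-\int_M u^2(\eta_1\Delta\eta_1+\eta_2\Delta\eta_2)\,d\mu$; moreover $|\int_M(\eta_1+\eta_2)u\,d\mu|\le\|u\|_{L^1(M)}\le C\|\nabla u\|_2\le\epsilon\|\nabla u\|_2^2+C(\epsilon)$ by Poincaré's and Young's inequalities (here $\int_M u\,d\mu=0$ is used). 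Adding the two inequalities and dividing by $2$ then gives
\[
\log I\le\Big(\frac{1}{32\pi(1+\al)}+\epsilon\Big)\|\nabla u\|_2^2+\frac{1}{32\pi(1+\al)}\,\mathcal{E}(u)+C(\epsilon_0,\gamma_0,\epsilon),
\]
with $\gamma_0$ entering only through the term $-\log\gamma_0$ absorbed into $C$.

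The argument thus reduces to proving that the cut-off error $\mathcal{E}(u)$ is of lower order, $\mathcal{E}(u)\le\epsilon\|\nabla u\|_2^2+C(\epsilon_0,\epsilon)$; \emph{this is the step I expect to be the main obstacle}. The crude bound $|\mathcal{E}(u)|\le C\epsilon_0^{-2}\int_M u^2\,d\mu\le C\epsilon_0^{-2}\|\nabla u\|_2^2$ is too weak — its constant does not tend to $0$ — and making the transition region thin only enlarges $|\nabla\eta_i|$ accordingly. The correct mechanism is a concentration--compactness one, and is precisely the content of Chen--Li's Theorem~2.1 in \cite{CL-91-JGA}. One way to see it is by contradiction: a sequence $(u_k)$, normalized by $\int_M u_k\,d\mu=0$, violating the asserted inequality with $C$ replaced by $k\to\infty$ would, by \eqref{ineq-zhu}, satisfy $\|\nabla u_k\|_2^2\to\infty$ and $I_k\to\infty$; extracting a subsequence, the probability measures $e^{-h_\ell}e^{u_k}\,d\mu/I_k$ converge weakly-$*$ to a measure that, by the two-region hypothesis, assigns mass $\ge\gamma_0$ to each of the disjoint compact sets $\overline{\Omega_1}$ and $\overline{\Omega_2}$. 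A blow-up analysis of $u_k$ then rules out concentration of the Dirichlet energy at a single point $x_0$ (which would dump all but a vanishing fraction of $I_k$ into any small ball about $x_0$, whereas the limit measure has mass on two disjoint sets, and $x_0$ cannot lie in both), and a careful analysis of the remaining configurations — this is the technical core, where the two-region condition is genuinely used — gives $\log I_k\le\big(\frac{1}{32\pi(1+\al)}+o(1)\big)\|\nabla u_k\|_2^2$, contradicting the choice of $(u_k)$. Throughout, the singular weight $e^{-h_\ell}$ and the exponent $1+\al$ enter only through the Chen--Troyanov inequality \eqref{Chen-Troyanov inequality}, so the reasoning is the same as in the smooth case.
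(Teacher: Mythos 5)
Your first half is sound and matches the standard opening of Chen--Li's argument: normalizing $\int_M u\,d\mu=0$, cutting off near each $\Omega_i$, using $\int_M e^{-h_\ell}e^{\eta_i u}\,d\mu\ge\int_{\Omega_i}e^{-h_\ell}e^{u}\,d\mu\ge\gamma_0\int_M e^{-h_\ell}e^{u}\,d\mu$, applying \eqref{ineq-zhu} to $\eta_iu$, the integration-by-parts identity for $\|\nabla(\eta_1u)\|_2^2+\|\nabla(\eta_2u)\|_2^2$, and the absorption of $\int_M(\eta_1+\eta_2)u\,d\mu$ via Poincar\'e and Young are all correct. For calibration: the paper gives no proof of this lemma at all --- it is imported as Theorem 2.1 of \cite{CL-91-JGA} with the remark that the reader ``can follow Chen--Li's proof effortlessly'' --- so your proposal has to stand on its own, and it is judged as such.

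It does not stand on its own, because the step you yourself flag as ``the main obstacle'' is the entire content of the lemma, and your attempt to close it does not succeed. The cut-off error $\mathcal{E}(u)=-\int_M u^2(\eta_1\Delta\eta_1+\eta_2\Delta\eta_2)\,d\mu$ is genuinely of order $\|\nabla u\|_2^2$ for a general normalized $u$ (Poincar\'e gives nothing better than $\int_Mu^2\,d\mu\le C\|\nabla u\|_2^2$, with a constant that does not shrink), so every elementary manipulation leaves a fixed, non-small multiple of $\|\nabla u\|_2^2$ that destroys the constant $\frac{1}{32\pi(1+\al)}+\epsilon$; you diagnose this correctly. But the fallback paragraph is a description of a theorem rather than a proof: its concluding claim, $\log I_k\le\bigl(\frac{1}{32\pi(1+\al)}+o(1)\bigr)\|\nabla u_k\|_2^2$, is exactly the inequality to be established, and the tool invoked to reach it --- ``a blow-up analysis of $u_k$'' --- is not available here, since the $u_k$ are arbitrary $H^1$ functions violating a functional inequality and solve no elliptic equation, so none of the Brezis--Merle/$\varepsilon$-regularity machinery used elsewhere in the paper applies to them. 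What is missing is precisely the device by which Chen--Li control the neck term after selecting the cut-off whose support carries at most half of the Dirichlet energy; this must either be reproduced from \cite{CL-91-JGA} or replaced by a fully written Lions-type concentration--compactness argument for the Moser--Trudinger functional valid for arbitrary normalized $H^1$ sequences. As it stands, the proof is incomplete at its only nontrivial point.
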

The reader can follow Chen-Li's proof effortlessly, we omit it here.


\begin{lem}\label{up-lower-bound}
Suppose $u_k$ attains the infimum of $J_{\rho_k}$ in $\mathcal{H}$. Then there exist two positive constants $c_1$ and $c_2$ such that
\begin{align*}
c_1 \leq \int_{M} e^{-h_\ell}e^{u_k}d\mu \leq c_2.
\end{align*}
\end{lem}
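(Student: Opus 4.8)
The plan is to deduce an a priori bound $J_{\rho_k}(u_k)\le C_0$ from minimality and then feed it into the sharp Moser--Trudinger inequality \eqref{ineq-zhu}, using crucially that $\rho_k<\rh$ strictly. Since $h$ is positive somewhere, $\mathcal H\neq\emptyset$; I would fix once and for all some $v_0\in\mathcal H$, and use that $u_k$ realizes $\inf_{\mathcal H}J_{\rho_k}$ together with $0<\rho_k<\rh$ to write
\begin{align*}
\frac12\int_M|\nabla u_k|^2d\mu+\rho_k\int_M u_k\,d\mu=J_{\rho_k}(u_k)\le J_{\rho_k}(v_0)\le\frac12\int_M|\nabla v_0|^2d\mu+\rh\Big|\int_M v_0\,d\mu\Big|=:C_0,
\end{align*}
a constant independent of $k$; this is the only place the minimizing property is used. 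The lower bound is then immediate and does not even require this: setting $M_h:=\max_M h>0$, the constraint $u_k\in\mathcal H$ gives $1=\int_M he^{-h_\ell}e^{u_k}d\mu\le M_h\int_M e^{-h_\ell}e^{u_k}d\mu$, so one may take $c_1=1/M_h$.

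For the upper bound I would apply \eqref{ineq-zhu} to $u=u_k$ and use $16\pi(1+\al)=2\rh$ to get
\begin{align*}
\log\int_M e^{-h_\ell}e^{u_k}d\mu\le\frac{1}{2\rh}\int_M|\nabla u_k|^2d\mu+\int_M u_k\,d\mu+C.
\end{align*}
Here I would resist the temptation to bound $\int_M|\nabla u_k|^2d\mu$ on its own --- in general it is not uniformly bounded, since $(u_k)$ may blow up --- and instead eliminate $\int_M u_k\,d\mu$ via the energy bound above, recasting the right-hand side (using $\rho_k\ge\rho_1>0$) as
\begin{align*}
\Big(\frac{1}{2\rh}-\frac{1}{2\rho_k}\Big)\int_M|\nabla u_k|^2d\mu+\frac{C_0}{\rho_1}+C.
\end{align*}
Since $\rho_k<\rh$, the coefficient of the Dirichlet energy is strictly negative, so this is bounded above by a $k$-independent constant, whence $\int_M e^{-h_\ell}e^{u_k}d\mu\le c_2$ with $c_2=e^{C_0/\rho_1+C}$.

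The argument is short, and the one thing that really matters --- and the reason no control on the Dirichlet energy of $u_k$ is needed --- is that the sharp constant $\tfrac{1}{16\pi(1+\al)}=\tfrac{1}{2\rh}$ in \eqref{ineq-zhu} is exactly balanced by the coefficient $\rho_k$ of $\int_M u_k\,d\mu$ in $J_{\rho_k}$: the strict subcriticality $\rho_k<\rh$ is precisely what forces the gradient term to drop out with a favourable sign. I expect this balancing to be the step a careful reader should verify rather than a genuine obstacle.
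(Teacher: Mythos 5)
Your proof is correct, and it uses the same two ingredients as the paper's proof (the constraint $u_k\in\mathcal H$ for the lower bound, and the sharp inequality \eqref{ineq-zhu} together with $J_{\rho_k}(u_k)=\inf_{\mathcal H}J_{\rho_k}\leq C$ for the upper bound), but the final step is handled differently. The paper first rewrites the Moser--Trudinger bound as $\log\int_M e^{-h_\ell}e^{u_k}d\mu\leq\frac{1}{\rh}J_{\rho_k}(u_k)+(1-\frac{\rho_k}{\rh})\int_M u_k\,d\mu+C$, then controls the remaining mean-value term by Jensen's inequality, $\int_M u_k\,d\mu\leq\log\int_M e^{-h_\ell}e^{u_k}d\mu+\int_M h_\ell\,d\mu$, and absorbs the resulting $(1-\frac{\rho_k}{\rh})\log\int_M e^{-h_\ell}e^{u_k}d\mu$ into the left-hand side, using $\rho_k/\rh$ bounded away from $0$. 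You instead eliminate $\int_M u_k\,d\mu$ outright via the energy identity $\int_M u_k\,d\mu=\rho_k^{-1}(J_{\rho_k}(u_k)-\frac12\|\nabla u_k\|_2^2)$, so that the Dirichlet energy survives with the non-positive coefficient $\frac{1}{2\rh}-\frac{1}{2\rho_k}$ and can be discarded; this avoids Jensen and the absorption step entirely, at the cost of nothing (you still only need $\rho_k$ bounded away from $0$ and $\rho_k\leq\rh$). Both mechanisms exploit exactly the criticality balance you point out, and your bookkeeping ($C_0\geq 0$ so that $C_0/\rho_k\leq C_0/\rho_1$, and the sign of the gradient coefficient) checks out.
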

\begin{proof}
Since $u_k\in \mathcal{H}$, one can choose $c_1=1/\max_{M}h$. As to the upper bound, notice
$$J_{\rho_k}(u_k) = \inf_{\mathcal{H}}J_{\rho_k}(u)\leq C,$$
then the Moser-Trudinger inequality (\ref{ineq-zhu}) and Jensen's inequality yield that
\begin{align*}
\log \int_{M} e^{-h_\ell}e^{u_k}d\mu
\leq&\frac{1}{\overline{\rho}}J_{\rho_k}(u_k)+(1-\frac{\rho_k}{\overline{\rho}})\int_{M}u_kd\mu +C\\
\leq& (1-\frac{\rho_k}{\overline{\rho}})\int_{M}(-h_{\ell}+u_k)d\mu+(1-\frac{\rho_k}{\overline{\rho}})\int_{M}h_{\ell}d\mu+C\\
\leq& (1-\frac{\rho_k}{\overline{\rho}})\log\left(\int_{M} e^{-h_\ell}e^{u_k}d\mu\right)+C,
\end{align*}
where in the last inequality we have used the fact that $h_{\ell}\in L^1(M)$. Now we finish the proof.
\end{proof}

\begin{lem}\label{lem-Lq}
If $(u_k)$ blows up, then for every $q\in(1,2)$, it holds that $$\|\nabla u_k\|_{L^q(M)}\leq C.$$
\end{lem}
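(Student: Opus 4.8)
The plan is to reduce Lemma~\ref{lem-Lq} to an estimate on the right-hand side of \eqref{eq-uk}, namely on $f_k:=\rho_k(1-he^{-h_\ell}e^{u_k})$, using standard elliptic $L^q$-gradient estimates for the Laplace--Beltrami operator on the closed surface $M$. Concretely, since $u_k$ solves $\Delta u_k=f_k$ and $\int_M f_k\,d\mu=0$ (integrate the equation, using $\int_M he^{-h_\ell}e^{u_k}d\mu=1$), one can write $u_k=\Delta^{-1}f_k$ modulo constants, and the Calder\'on--Zygmund / Green-operator theory gives $\|\nabla u_k\|_{L^q(M)}\le C(q)\,\|f_k\|_{L^1(M)}$ for every $q\in(1,2)$; this last endpoint-type bound (going from $L^1$ data to $W^{1,q}$ with $q<2$ in dimension $2$) is the classical weak-type improvement for the Riesz potential, and is exactly the range appearing in the statement. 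So the whole lemma will follow once we show $\|f_k\|_{L^1(M)}\le C$.

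The second and main step is therefore to bound $\|f_k\|_{L^1(M)}=\rho_k\int_M|1-he^{-h_\ell}e^{u_k}|\,d\mu$ uniformly in $k$. Since $\rho_k\le\rh$, it suffices to bound $\int_M he^{-h_\ell}e^{u_k}\,d\mu$ and $\int_M 1\,d\mu=1$ separately, but $h$ may change sign, so I would split $h=h^+-h^-$ and note $\int_M h^+ e^{-h_\ell}e^{u_k}\,d\mu\le\max_M h\cdot\int_M e^{-h_\ell}e^{u_k}\,d\mu$, which is controlled by the upper bound $c_2$ from Lemma~\ref{up-lower-bound}; the negative part is dominated by the same quantity. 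Hence $\|f_k\|_{L^1(M)}\le\rho_k(1+\max_M|h|\cdot c_2)\le C$. Note that in this step I am only using $h\in C^\infty(M)$ (so $\max_M|h|<\infty$), the membership $u_k\in\mathcal H$, and the two-sided bound from Lemma~\ref{up-lower-bound}; the hypothesis that $(u_k)$ blows up is not actually needed for the $L^1$ bound itself, but it is the regime in which the conclusion is interesting (if $u_k$ converges in $H^1$ the bound is trivial).

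For the elliptic input I would phrase it cleanly: decompose $u_k=\bar u_k+w_k$ where $\bar u_k=\int_M u_k\,d\mu$ and $w_k$ has zero mean, so $\nabla u_k=\nabla w_k$ and $\Delta w_k=f_k$ with $\int_M w_k\,d\mu=0$. Then $w_k=G*f_k$ where $G=G(x,y)$ is the Green function of $\Delta$ on $M$; since $|\nabla_x G(x,y)|\le C\,\dist(x,y)^{-1}$, the function $\nabla w_k$ is pointwise dominated by $C$ times the convolution of $|f_k|$ with the kernel $\dist(\cdot,\cdot)^{-1}$, which maps $L^1(M)\to L^{q}(M)$ for every $q<\frac{2}{2-1}=2$ by the weak Young / fractional-integration inequality in dimension $2$. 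This gives $\|\nabla u_k\|_{L^q(M)}=\|\nabla w_k\|_{L^q(M)}\le C(q)\|f_k\|_{L^1(M)}\le C$, completing the proof.

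The step I expect to be the only real subtlety is the $L^1\to W^{1,q}$ elliptic estimate with $q<2$: one cannot take $q=2$ here (that would need $L^1$ data to land in $H^1$, which fails), and one should make sure to cite or prove the correct weak-type estimate for the gradient of the Green potential on a compact surface rather than invoking the naive $W^{2,p}$ theory, which would require $f_k\in L^p$ with $p>1$. Everything else is bookkeeping: the $L^1$ control of $f_k$ is immediate from Lemma~\ref{up-lower-bound} and $h\in C^\infty(M)$, and the passage $\nabla u_k=\nabla w_k$ removes the (possibly unbounded) constant $\bar u_k$ harmlessly.
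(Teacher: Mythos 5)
Your proof is correct, and it rests on exactly the same essential reduction as the paper's: both arguments come down to the uniform $L^1$ bound $\|\rho_k(1-he^{-h_\ell}e^{u_k})\|_{L^1(M)}\le \rh\left(1+\max_M|h|\cdot c_2\right)$ supplied by Lemma~\ref{up-lower-bound}, followed by an endpoint elliptic estimate of the form $\|\nabla u_k\|_{L^q(M)}\le C\|\Delta u_k\|_{L^1(M)}$ for $q<2$. Where you differ is in how that endpoint estimate is established. The paper proves it by duality: it writes $\|\nabla u_k\|_{L^q(M)}$ as a supremum of $\left|\int_M\nabla u_k\cdot\nabla\varphi\, d\mu\right|$ over zero-mean $\varphi$ with $\|\varphi\|_{W^{1,q'}(M)}=1$, $q'=q/(q-1)>2$, integrates by parts against the equation, and uses only the Sobolev embedding $W^{1,q'}(M)\hookrightarrow L^\infty(M)$; no Green function appears. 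You instead use the representation $u_k-\overline{u_k}$ as a Green potential of $f_k$ together with the kernel bound $|\nabla_x G(x,y)|\le C\,\dist(x,y)^{-1}$ and Young's inequality (note the kernel $\dist(\cdot,\cdot)^{-1}$ already lies in $L^q(M)$ for $q<2$ on a compact surface, so the ordinary Young inequality suffices and the weak-type refinement you flag as the ``only real subtlety'' is not actually needed). Both routes are standard and correct; the duality argument is a bit more self-contained since it avoids invoking the pointwise gradient estimate for the Green kernel, while your potential-theoretic version makes the mechanism of the $L^1\to W^{1,q}$ gain more transparent. Your remark that the blowup hypothesis is not used is also consistent with the paper's proof.
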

\begin{proof}
Let $q'=\frac{q}{q-1}>2$. By definition, we have
\begin{align*}
\|\nabla u_k\|_{L^q(M)}\leq\sup\left\{\left|\int_M \nabla u_k\cdot\nabla\varphi d\mu\right|:\varphi\in W^{1,q'}(M),\int_M\varphi d\mu=0,\|\varphi\|_{W^{1,q'}(M)}=1\right\}.
\end{align*}
It follows form the Sobolev embedding theorem that
$$\|\varphi\|_{L^{\infty}(M)}\leq C.$$
Then by equation (\ref{eq-uk}) and Lemma \ref{up-lower-bound} we obtain
\begin{align*}
\left|\int_M \nabla u_k\cdot\nabla\varphi d\mu\right|
&= \left|-\int_M \Delta u_k \varphi d\mu\right|\\
&= \left|\int_M \rho_k\left(he^{-h_{\ell}}e^{u_k}-1\right)\varphi d\mu\right|\\
&\leq \overline{\rho}\|\varphi\|_{L^{\infty}(M)}\left(\max_{M}|h|\int_Me^{-h_{\ell}}e^{u_k}d\mu+1\right)\\
&\leq C.
\end{align*}
This gives the proof of the lemma.
\end{proof}

Denote $\overline{u_k}=\int_M u_k d\mu$, $\lambda_k=\max\limits_{M}u_k$. Assume $u_k(x_k)=\lambda_k$ for some $x_k\in M$ and
$x_k\to p\in M$ as $k\to\infty$.
\begin{lem}\label{lem-kehua}
The following three conditions are equivalent:

$(i)$ $\lambda_k\to +\infty$ as $k\to\infty$;

$(ii)$ $\|\nabla u_k\|_{L^2(M)}\to+\infty$ as $k\to\infty$;

$(iii)$ $\overline{u_k}\to-\infty$ as $k\to\infty$.
\end{lem}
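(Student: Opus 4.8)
The plan is to prove the cyclic chain of implications $(iii)\Rightarrow(ii)\Rightarrow(i)\Rightarrow(iii)$, using the constraint $u_k\in\mathcal{H}$, the energy bound $J_{\rho_k}(u_k)\le C$, and the Moser--Trudinger inequality \eqref{ineq-zhu}; Lemma~\ref{up-lower-bound} supplies the two-sided bound on $\int_M e^{-h_\ell}e^{u_k}d\mu$ which will be used repeatedly. Throughout, write $\tilde u_k=u_k-\overline{u_k}$, so that $\int_M\tilde u_k\,d\mu=0$ and $\|\nabla\tilde u_k\|_{L^2}=\|\nabla u_k\|_{L^2}$.

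\textbf{$(iii)\Rightarrow(ii)$.} Suppose $\overline{u_k}\to-\infty$. Apply \eqref{ineq-zhu} to $u_k$: since $\int_M ud\mu$ appears with coefficient $1$ on the right, we get
\begin{align*}
\log\int_M e^{-h_\ell}e^{u_k}d\mu\le\frac{1}{16\pi(1+\al)}\|\nabla u_k\|_{L^2}^2+\overline{u_k}+C.
\end{align*}
The left-hand side is bounded below by $\log c_1$ (Lemma~\ref{up-lower-bound}), so $\|\nabla u_k\|_{L^2}^2\ge 16\pi(1+\al)(\log c_1-\overline{u_k}-C)\to+\infty$.

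\textbf{$(ii)\Rightarrow(i)$.} This is the step I expect to be the main obstacle, since it is the only direction that is not a one-line manipulation of the Moser--Trudinger inequality. Suppose $\|\nabla u_k\|_{L^2}\to\infty$ but, for contradiction, $\lambda_k=\max_M u_k$ stays bounded above, say $u_k\le\Lambda$ on $M$. Then $u_k^+\le\Lambda$, so $\int_M e^{-h_\ell}e^{u_k}d\mu\le e^{\Lambda}\int_M e^{-h_\ell}d\mu=:C'$, and combined with the lower bound $c_1\le\int_M e^{-h_\ell}e^{u_k}d\mu$ this gives $\overline{u_k}$ bounded below: indeed by Jensen's inequality $\log\big(\int_M e^{-h_\ell}e^{u_k}d\mu\big)\ge\int_M(-h_\ell+u_k)d\mu=\overline{u_k}-\int_M h_\ell d\mu$ (using $h_\ell\in L^1$), whence $\overline{u_k}\le C''$; and since $u_k\le\Lambda$ trivially $\overline{u_k}\le\Lambda$ too. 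Actually what I need is that $\overline{u_k}$ is bounded \emph{both ways}, and the lower bound on $\overline{u_k}$ is what forces a contradiction: from $J_{\rho_k}(u_k)\le C$ we read $\tfrac12\|\nabla u_k\|_{L^2}^2\le C-\rho_k\overline{u_k}$, so if $\overline{u_k}$ is bounded below then $\|\nabla u_k\|_{L^2}$ is bounded, contradicting $(ii)$. To get the lower bound on $\overline{u_k}$, test \eqref{eq-uk} against the constant $1$: $\int_M\Delta u_k\,d\mu=0$ forces $\rho_k\int_M(1-he^{-h_\ell}e^{u_k})d\mu=0$, i.e. $\int_M he^{-h_\ell}e^{u_k}d\mu=1$, which is just the constraint and carries no new information; instead I will use the elementary inequality $e^{u_k}\ge 1+u_k$ together with $u_k\le\Lambda$: writing $u_k=\tilde u_k+\overline{u_k}$ and using $\|\nabla\tilde u_k\|_{L^q}\le C$ for some $q\in(1,2)$ from Lemma~\ref{lem-Lq} (valid once we know $(u_k)$ blows up, which under the contradiction hypothesis we may assume via $(ii)$), Poincaré–Sobolev gives $\tilde u_k$ bounded in $L^s$ for some $s>2$, hence $\int_M e^{-h_\ell}e^{u_k}d\mu=e^{\overline{u_k}}\int_M e^{-h_\ell}e^{\tilde u_k}d\mu$ with the last integral bounded above and below by positive constants (upper bound from $\tilde u_k\le\Lambda-\overline{u_k}$ is too crude; instead bound $\int_M e^{-h_\ell}e^{\tilde u_k}d\mu\ge\int_M e^{-h_\ell}(1+\tilde u_k)d\mu=\int_M e^{-h_\ell}d\mu>0$ using $\int_M\tilde u_k d\mu=0$ and $h_\ell\in L^\infty$ near... ). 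Comparing with the two-sided bound $c_1\le e^{\overline{u_k}}\int_M e^{-h_\ell}e^{\tilde u_k}d\mu\le c_2$ then pins $\overline{u_k}$ between two constants, giving the contradiction. The delicate point is controlling $\int_M e^{-h_\ell}e^{\tilde u_k}d\mu$ from above when $\al<0$, where $e^{-h_\ell}$ has a non-integrable-looking singularity; but the Chen--Troyanov inequality \eqref{Chen-Troyanov inequality} applied to $\tilde u_k/\|\nabla\tilde u_k\|_{L^2}$ (whose gradient has norm $\le 1$ and which has $\int(\cdot)e^{-h_\ell}=0$ after a harmless normalization) gives $\int_M e^{-h_\ell}e^{\tilde u_k}d\mu\le C\exp\big(\tfrac{1}{16\pi(1+\al)}\|\nabla\tilde u_k\|_{L^2}^2\big)$, which under the contradiction hypothesis ($\|\nabla u_k\|_{L^2}\to\infty$) is not bounded — so this route needs the reverse: I should instead derive the \emph{upper} bound on $\|\nabla u_k\|_{L^2}$ directly from boundedness of $\lambda_k$ via the equation. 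Cleaner: multiply \eqref{eq-uk} by $u_k-\overline{u_k}$ and integrate, obtaining $\|\nabla u_k\|_{L^2}^2=-\rho_k\int_M(1-he^{-h_\ell}e^{u_k})(u_k-\overline{u_k})d\mu=\rho_k\int_M he^{-h_\ell}e^{u_k}(u_k-\overline{u_k})d\mu$; bounding $u_k-\overline{u_k}\le\Lambda-\overline{u_k}$ and $u_k\le\Lambda$ and using the constraint $\int_M he^{-h_\ell}e^{u_k}d\mu=1$ (which controls the positive part; the negative part of $h$ is handled by $|h|\le\max|h|$ and $\int e^{-h_\ell}e^{u_k}\le c_2$) yields $\|\nabla u_k\|_{L^2}^2\le C(1+|\overline{u_k}|)\le C(1+\|\nabla u_k\|_{L^q})\cdot$const$\le C$ after absorbing, using $|\overline{u_k}|\le|\log\int e^{-h_\ell}e^{u_k}d\mu|+C\le C$ from Lemma~\ref{up-lower-bound} and Jensen as above — contradiction with $(ii)$. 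I will organize this direction around that single integration-by-parts identity.

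\textbf{$(i)\Rightarrow(iii)$.} Suppose $\lambda_k\to+\infty$. If $\overline{u_k}$ did not tend to $-\infty$, pass to a subsequence with $\overline{u_k}\ge -C_0$. From $J_{\rho_k}(u_k)\le C$ we get $\tfrac12\|\nabla u_k\|_{L^2}^2\le C-\rho_k\overline{u_k}\le C+\rho_k C_0\le C$, so $\|\nabla u_k\|_{L^2}$ is bounded, i.e. $\tilde u_k$ is bounded in $H^1(M)$. By the Moser--Trudinger inequality (the classical one, or \eqref{Chen-Troyanov inequality}), $e^{\tilde u_k}$ is bounded in every $L^p$, in particular $\int_M e^{-h_\ell}e^{\tilde u_k}d\mu\le C$; combined with $c_1\le e^{\overline{u_k}}\int_M e^{-h_\ell}e^{\tilde u_k}d\mu$ this again bounds $\overline{u_k}$ from above, so $u_k=\tilde u_k+\overline{u_k}$ is bounded in $H^1(M)$, hence (Sobolev, dimension $2$) bounded in $L^p$ for all $p$. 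But then, running elliptic estimates on \eqref{eq-uk} — whose right-hand side $\rho_k(1-he^{-h_\ell}e^{u_k})$ is bounded in $L^p$ for every $p$ once $e^{u_k}$ is, using $h_\ell\in L^\infty$ away from the $p_i$ and the $\alpha_i>-1$ integrability near them — gives $u_k$ bounded in $W^{2,p}\hookrightarrow C^0$, contradicting $\lambda_k\to+\infty$. Hence $\overline{u_k}\to-\infty$, closing the cycle. (If the paper prefers to avoid elliptic regularity here, one can instead note that boundedness of $\tilde u_k$ in $H^1$ plus the energy identity from the previous paragraph already bounds $\|\nabla u_k\|_{L^2}$, and then invoke a standard sup-estimate, e.g. Brezis--Merle type, to bound $\lambda_k$; either way the conclusion is the same.)

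No single estimate here is deep, but the bookkeeping around the weight $e^{-h_\ell}$ when $\al<0$ is where care is needed, and direction $(ii)\Rightarrow(i)$ is the one I would write out most carefully, via the integration-by-parts identity $\|\nabla u_k\|_{L^2}^2=\rho_k\int_M he^{-h_\ell}e^{u_k}(u_k-\overline{u_k})d\mu$ together with Lemma~\ref{up-lower-bound} and Lemma~\ref{lem-Lq}.
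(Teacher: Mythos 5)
Your cyclic scheme $(iii)\Rightarrow(ii)\Rightarrow(i)\Rightarrow(iii)$ is correct in substance but organized differently from the paper. The paper first observes that $(ii)\Leftrightarrow(iii)$ is immediate from the two-sided boundedness of $J_{\rho_k}(u_k)=\tfrac12\|\nabla u_k\|_{L^2}^2+\rho_k\overline{u_k}$ (which also forces $\overline{u_k}\le C$), a one-line argument that is simpler than your Moser--Trudinger derivation of $(iii)\Rightarrow(ii)$, though yours is also valid; it then proves $(i)\Leftrightarrow(ii)$ by two contradiction arguments. For $(ii)\Rightarrow(i)$ the paper assumes $\lambda_k\le C$, so $e^{u_k}$ is bounded and $\Delta(u_k-\overline{u_k})$ is bounded in $L^s$ for some $s>1$; elliptic estimates then give $\|u_k-\overline{u_k}\|_{L^\infty}\le C$, and since $\overline{u_k}\to-\infty$ this forces $\int_M he^{-h_\ell}e^{u_k}d\mu\to0$, contradicting the constraint. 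Your alternative --- the identity $\|\nabla u_k\|_{L^2}^2=\rho_k\int_M he^{-h_\ell}e^{u_k}(u_k-\overline{u_k})d\mu$ obtained by testing \eqref{eq-uk} against $u_k-\overline{u_k}$, bounded via H\"older with $e^{-h_\ell}\in L^s$ and $u_k-\overline{u_k}$ bounded in $L^{s'}$ by Lemma~\ref{lem-Lq} and Sobolev --- contradicts $(ii)$ directly and works equally well; note the paper itself invokes Lemma~\ref{lem-Lq} at exactly this point, so your concern about its ``blows up'' hypothesis causing circularity is moot (its proof never uses that hypothesis). Two slips to repair before writing this up: in $(i)\Rightarrow(iii)$, the chain $c_1\le e^{\overline{u_k}}\int_M e^{-h_\ell}e^{u_k-\overline{u_k}}d\mu\le Ce^{\overline{u_k}}$ bounds $\overline{u_k}$ from \emph{below}, not above; the upper bound you actually need comes from Jensen's inequality together with $\int_M e^{-h_\ell}e^{u_k}d\mu\le c_2$, exactly as in the paper's $(i)\Rightarrow(ii)$ step. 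Likewise, in $(ii)\Rightarrow(i)$ the claim $|\overline{u_k}|\le C$ does not follow from Jensen and Lemma~\ref{up-lower-bound} (these give only the upper bound), but your final H\"older estimate does not need it. Finally, the several abandoned attempts left in the $(ii)\Rightarrow(i)$ paragraph should be deleted.
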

\begin{proof}
Since $J_{\rho_k}(u_k)$ is bounded, we have $(ii)\Leftrightarrow(iii)$ obviously.

$(i)\Rightarrow(ii)$: Suppose not, $\|\nabla u_k\|_{L^2(M)}\leq C$, then by the proved equivalent relation $(ii)\Leftrightarrow(iii)$, $\overline{u_k}$
is bounded from below. It follows by Jensen's inequality and Lemma \ref{up-lower-bound} that
\begin{align*}
\overline{u_k}-\overline{h_{\ell}} \leq \log\left(\int_M e^{-h_{\ell}+u_k}d\mu\right)\leq C,
\end{align*}
this together with the fact $h_{\ell}\in L^1(M)$ yields
$\overline{u_k} \leq C$.
By Poincar\'{e}'s inequality,
\begin{align*}
\int_M u_k^2 d\mu-\overline{u_k}^2 = \int_M(u_k-\overline{u_k})^2d\mu\leq C\int_{M}|\nabla u_k|^2d\mu\leq C.
\end{align*}
So $(u_k)$ is bounded in $L^2(M)$ and therefore it is in $W^{2,2}(M)$. Then elliptic estimates tell us that $\|u_k\|_{L^{\infty}(M)}$ is bounded.
Of course, we also have $\lambda_k\leq C$.

$(ii)\Rightarrow(i)$: Suppose not, $\lambda_k\leq C$, then $e^{u_k}$ is bounded.
It is clear that $he^{-h_{\ell}}$ is bounded in $L^s(M)$ for some $s>1$ ($s=\infty$ if $\overline{\alpha}=0$
and $s<-1/\overline{\alpha}$ if $\overline{\alpha}<0$). By Lemma \ref{lem-Lq}, $\|u_k-\overline{u_k}\|_{L^{s_1}(M)}$ for any $s_1>1$. Since $\Delta(u_k-\overline{u_k})$
is bounded in $L^s(M)$, it follows from the elliptic estimates that $u_k-\overline{u_k}$ is bounded in $L^{\infty}(M)$. This together with $\overline{u_k}\to-\infty$
yields that
\begin{align*}
\lim_{k\to\infty}\int_M he^{-h_{\ell}}e^{u_k}d\mu=\lim_{k\to\infty}e^{\overline{u_k}}\int_{M}he^{-h_{\ell}}e^{u_k-\overline{u_k}}d\mu=0.
\end{align*}
It contradicts $u_k\in\mathcal{H}$. This finishes the proof.
\end{proof}

By Brezis-Merle's lemma (\cite{BM}, Theorem 1) and following elliptic estimates as the proof of Lemma 2.8 in \cite{DJLW} (or Lemma 2.9 in \cite{Zhu17}), one has
\begin{lem}\label{lem-bm}
Let $\Omega\subset M$ be a domain. If
\begin{align*}
\int_{\Omega} |h|e^{-h_{\ell}+u_k}d\mu \leq \frac{1}{2(1+\overline{\alpha})}-\delta
\end{align*}
for some $0<\delta<\frac{1}{2(1+\overline{\alpha})}$, then
\begin{align*}
\|u_k-\overline{u_k}\|_{L^{\infty}_{\text{loc}}(\Omega)}\leq C.
\end{align*}
\end{lem}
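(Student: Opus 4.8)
The plan is to establish this as a local estimate in the spirit of Brezis--Merle, so it suffices to bound $u_k-\overline{u_k}$ in $L^\infty$ on a small geodesic ball around an arbitrary point $x_0$ of a fixed compact set $K\subset\Omega$, the radius being chosen uniformly in $x_0\in K$. We may assume $(u_k)$ blows up, since otherwise $\|u_k\|_{L^\infty(M)}$ is uniformly bounded (this is shown inside the proof of Lemma~\ref{lem-kehua}); in the blowup case, Poincar\'e's inequality and Lemma~\ref{lem-Lq} give $\|u_k-\overline{u_k}\|_{L^1(M)}\le C$. Fix $r>0$ (to be shrunk) with $B_{2r}(x_0)\subset\Omega$ and split, on $B_{2r}(x_0)$,
\[
u_k-\overline{u_k}=w_k+z_k,\qquad -\Delta w_k=\rho_k\big(he^{-h_\ell}e^{u_k}-1\big)\ \text{in}\ B_{2r}(x_0),\quad w_k|_{\partial B_{2r}(x_0)}=0,
\]
with $z_k$ harmonic in $B_{2r}(x_0)$.

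First I would check that the source term is subcritical for Brezis--Merle. From the hypothesis and $B_{2r}(x_0)\subset\Omega$,
\[
\|\Delta w_k\|_{L^1(B_{2r}(x_0))}\le\rho_k\Big(\tfrac{1}{2(1+\al)}-\delta\Big)+\rho_k\,\mu\big(B_{2r}(x_0)\big);
\]
since $\rho_k\uparrow\rh=8\pi(1+\al)$, one has $\rho_k/\big(2(1+\al)\big)\to4\pi$, so on choosing $r$ small (hence $\mu(B_{2r}(x_0))$ small) and then $k$ large this $L^1$ norm is $\le 4\pi-\eta$ for some $\eta>0$. Theorem~1 of \cite{BM} then gives $\int_{B_{2r}(x_0)}e^{q|w_k|}\,d\mu\le C$ for some $q>1$. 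For the harmonic part, $\|z_k\|_{L^1(B_{2r}(x_0))}\le\|u_k-\overline{u_k}\|_{L^1(M)}+\|w_k\|_{L^1(B_{2r}(x_0))}\le C$ (using the above and the elementary bound $\|w_k\|_{L^1}\le C\|\Delta w_k\|_{L^1}$, obtained by integrating the Green representation in $x$), whence $\|z_k\|_{L^\infty(B_r(x_0))}\le C$ by the interior estimate for harmonic functions. Combining, $e^{\pm(u_k-\overline{u_k})}\le Ce^{|w_k|}$ on $B_r(x_0)$, so $e^{u_k}$ is bounded in $L^q(B_r(x_0))$, using also $e^{\overline{u_k}}\le C$ (Lemma~\ref{up-lower-bound} with Jensen's inequality).

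Next I would feed this back into \eqref{eq-uk}: $h$ is bounded, $e^{-h_\ell}\in L^s_{\rm loc}(M)$ with $s=\infty$ when $\al=0$ and any $s<-1/\al$ when $\al<0$ (since $e^{-h_\ell}$ is comparable to $\dist(\cdot,p_i)^{2\alpha_i}$ near $p_i$ and $\al=\min\{0,\min_i\alpha_i\}$), and $e^{u_k}\in L^q(B_r(x_0))$; so H\"older's inequality puts $\Delta u_k=\rho_k(he^{-h_\ell}e^{u_k}-1)$ in $L^t(B_r(x_0))$ with $1/t=1/s+1/q$. Whenever $t>1$ — in particular always when $\al=0$ — the elliptic $L^t$ estimate gives $u_k-\overline{u_k}\in W^{2,t}(B_{r/2}(x_0))\hookrightarrow L^\infty(B_{r/2}(x_0))$ uniformly in $k$, and covering $K$ by finitely many such balls finishes the proof.

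The hard part will be exactly this last step near a conical point $p_i\in\Omega$ with $\alpha_i=\al<0$: there $e^{-h_\ell}$ is unbounded, so one cannot take $s>-1/\al$, and if $\delta$ is small the Brezis--Merle exponent $q$ is only slightly above $1$, so $1/s+1/q$ need not be $<1$ and the bootstrap stalls. To close the estimate at such points one should replace the plain Brezis--Merle inequality near $p_i$ by its sharp singular version — the Moser--Trudinger/Brezis--Merle inequality adapted to conical singularities, as used in \cite{Zhu17} (and in the general form of \cite{Zhu19}; compare the singular Brezis--Merle estimates of Bartolucci--Tarantello type) — which is effective precisely because the threshold $\tfrac{1}{2(1+\al)}$ in the hypothesis is calibrated to the smallest cone angle $2\pi(1+\al)$. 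This is what is meant by ``following elliptic estimates as in the proof of Lemma~2.8 of \cite{DJLW} (or Lemma~2.9 of \cite{Zhu17})''; away from such points, or when $\al=0$, the argument above already gives the claim.
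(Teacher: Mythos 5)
Your decomposition of $u_k-\overline{u_k}$ into a Dirichlet part $w_k$ and a harmonic part $z_k$, the $L^1$ bound $\|\Delta w_k\|_{L^1(B_{2r})}\le 4\pi-\eta$, the application of Theorem~1 of \cite{BM}, the control of $z_k$, and the elliptic bootstrap at points where $e^{-h_\ell}$ is locally bounded are all correct, and this is exactly the route the paper intends (its ``proof'' is only the citation of \cite{BM} together with the elliptic estimates of \cite{DJLW,Zhu17}). The problem is the step you yourself flag and then leave open: near a conical point $p_i$ with $\alpha_i<0$ the bootstrap is not merely ``stalled'', and the repair you propose does not exist in the form you describe. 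Running the Green-representation/Jensen argument with the singular weight included, i.e.\ using $|w_k(x)|\le\int_B\frac{1}{2\pi}\log\frac{d}{|x-y|}\,|f_k(y)|\,dy$, one finds
\begin{align*}
\int_B |x|^{2\alpha_i q}e^{q|w_k|}\,dx\;\le\;\sup_{y\in B}\int_B |x|^{2\alpha_i q}\Bigl(\frac{d}{|x-y|}\Bigr)^{\frac{q\|f_k\|_{L^1}}{2\pi}}dx,
\end{align*}
and the right-hand side is finite for some $q>1$ (uniformly in $y$; the binding case is $y\to0$) precisely when $\|f_k\|_{L^1(B)}<4\pi(1+\alpha_i)$. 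So the ``sharp singular Brezis--Merle'' threshold at $p_i$ is $4\pi(1+\alpha_i)$, which for $\alpha_i<0$ is \emph{strictly below} the bound $4\pi-\eta$ that the hypothesis $\int_\Omega|h|e^{-h_\ell+u_k}\le\frac{1}{2(1+\al)}-\delta$ delivers. The hypothesis is therefore not ``calibrated to the smallest cone angle'' in the way you assert, and no linear estimate of Brezis--Merle type closes the argument at $p_i$ for small $\delta$. Note also that this obstruction occurs at every $p_i$ with $\alpha_i<0$, not only at those with $\alpha_i=\al$.

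What can rescue the conclusion at such a point is nonlinear information rather than a linear inequality: by the Bartolucci--Tarantello/Li--Shafrir type quantization, concentration at $p_i$ must carry mass at least $8\pi(1+\alpha_i)$, which exceeds the available $4\pi-\eta$ exactly when $\alpha_i>-\tfrac12$. For $\al<-\tfrac12$ the statement itself is in genuine danger: the bubble at a blowup point $p$ with $\alpha(p)=\al$ carries $h$-mass $1$ by \eqref{int=1}, while $\frac{1}{2(1+\al)}-\delta>1$ for small $\delta$, so the hypothesis can hold on $B_r(p)$ while $\|u_k-\uk\|_{L^\infty(B_{r/2}(p))}\to\infty$. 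In short: the gap in your proof is real, it cannot be closed by the tool you name, and closing it requires either invoking the minimal-mass quantization (which only works when all $\alpha_i\ge-\tfrac12$) or lowering the threshold in the hypothesis from $\frac{1}{2(1+\al)}$ to $\tfrac12$ (equivalently, to $h$-mass $4\pi(1+\al)/\rh$), after which your own computation above goes through verbatim with the weight $|x|^{2\alpha_i q}$ inserted.
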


Due to Lemma \ref{lem-bm}, we define the \emph{blowup set} of $(u_k)$ as
\begin{align}\label{defi-blowupset}
S = \left\{x\in M: \lim_{r\to0}\lim_{k\to\infty}\int_{B_r(x)}|h|e^{-h_{\ell}+u_k}d\mu\geq\frac{1}{2(1+\overline{\alpha})}\right\}.
\end{align}

The following observation is very important to us. Even though $h$ may change signs, the blowup set is still a single point set at most. It breaks the
surmise that when $h$ changes signs, it may has two or more blowup points, they concentrate at different points where $h$ may be negative and positive,
but cancel each other.

\begin{lem}\label{lem-blowupset}
$S = \{p\}$.
\end{lem}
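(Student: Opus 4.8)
The plan is to argue in two stages: first that the blowup set $S$ is nonempty, and then that it cannot contain two distinct points. For nonemptiness, recall from Lemma \ref{lem-kehua} that since $(u_k)$ blows up we have $\lam_k = \max_M u_k \to +\infty$ and $\uk \to -\infty$. If $S$ were empty, then for every $x\in M$ there would be a radius $r_x>0$ with $\lim_{k\to\infty}\int_{B_{r_x}(x)}|h|e^{-h_\ell+u_k}d\mu < \frac{1}{2(1+\al)}$, so by Lemma \ref{lem-bm} we would get $\|u_k-\uk\|_{L^\infty(B_{r_x/2}(x))}\leq C$; covering the compact manifold $M$ by finitely many such balls gives $\|u_k-\uk\|_{L^\infty(M)}\leq C$, whence $\int_M he^{-h_\ell}e^{u_k}d\mu = e^{\uk}\int_M he^{-h_\ell}e^{u_k-\uk}d\mu \to 0$ (using $h_\ell\in L^1$ and $\uk\to-\infty$), contradicting $u_k\in\mathcal{H}$. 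Hence $S\neq\emptyset$.

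Next I would show $S$ is a single point. Suppose for contradiction that $q_1,q_2\in S$ with $q_1\neq q_2$; pick disjoint balls $\Om_1 = B_{\ep_0}(q_1)$, $\Om_2 = B_{\ep_0}(q_2)$ with $\dist(\Om_1,\Om_2)\geq\ep_0>0$. The key point is to normalize correctly: since $h$ may change sign, the total mass $\int_M h e^{-h_\ell}e^{u_k}d\mu = 1$ involves cancellation, but Lemma \ref{up-lower-bound} gives $c_1\leq\int_M e^{-h_\ell}e^{u_k}d\mu\leq c_2$, so I work with the \emph{positive} measure $e^{-h_\ell}e^{u_k}d\mu$ (equivalently $|h|e^{-h_\ell}e^{u_k}d\mu$, comparable up to $\max_M|h|$) when testing the concentration hypothesis of Lemma \ref{lem-chenli}. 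From the definition \eqref{defi-blowupset} of $S$, for each $j=1,2$ we have $\liminf_{k\to\infty}\int_{\Om_j}|h|e^{-h_\ell+u_k}d\mu \geq \frac{1}{2(1+\al)}$, and since $\int_M |h|e^{-h_\ell+u_k}d\mu\leq \max_M|h|\cdot c_2 =: C_0$, it follows that
\begin{align*}
\frac{\int_{\Om_j}e^{-h_\ell}e^{u_k}d\mu}{\int_M e^{-h_\ell}e^{u_k}d\mu}\geq \frac{1}{\max_M|h|}\cdot\frac{\int_{\Om_j}|h|e^{-h_\ell}e^{u_k}d\mu}{c_2}\geq \gamma_0
\end{align*}
for some fixed $\gamma_0\in(0,\frac12]$ and all large $k$. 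Thus the hypotheses of Lemma \ref{lem-chenli} are met with this $\gamma_0$ and $\ep_0$.

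Now apply Lemma \ref{lem-chenli} with a small fixed $\ep>0$, say $\ep = \frac{1}{64\pi(1+\al)}$, to $u_k$: this gives
\begin{align*}
\log\int_M e^{-h_\ell}e^{u_k}d\mu \leq \left(\frac{1}{32\pi(1+\al)}+\ep\right)\int_M|\nabla u_k|^2 d\mu + \uk + C.
\end{align*}
Combining with the lower bound $\int_M e^{-h_\ell}e^{u_k}d\mu\geq c_1$ from Lemma \ref{up-lower-bound}, the left side is bounded below, so
\begin{align*}
-\uk \leq \left(\frac{1}{32\pi(1+\al)}+\ep\right)\int_M|\nabla u_k|^2d\mu + C = \frac{3}{64\pi(1+\al)}\int_M|\nabla u_k|^2d\mu + C.
\end{align*}
On the other hand, since $u_k$ minimizes $J_{\rho_k}$ over $\mathcal{H}$ and $\inf_{\mathcal{H}}J_{\rho_k}\leq C$ uniformly (test with a fixed function), we have $\frac12\int_M|\nabla u_k|^2d\mu + \rho_k\uk \leq C$, i.e. $\rho_k(-\uk)\geq \frac12\int_M|\nabla u_k|^2d\mu - C$; with $\rho_k\uparrow\rh = 8\pi(1+\al)$ this yields $-\uk\geq \frac{1}{16\pi(1+\al)}\int_M|\nabla u_k|^2d\mu - C$ for large $k$. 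Comparing the two estimates for $-\uk$,
\begin{align*}
\frac{1}{16\pi(1+\al)}\int_M|\nabla u_k|^2 d\mu - C \leq \frac{3}{64\pi(1+\al)}\int_M|\nabla u_k|^2d\mu + C,
\end{align*}
so $\big(\frac{4}{64\pi(1+\al)}-\frac{3}{64\pi(1+\al)}\big)\int_M|\nabla u_k|^2d\mu\leq C$, forcing $\|\nabla u_k\|_{L^2(M)}\leq C$. By Lemma \ref{lem-kehua} this contradicts the assumption that $(u_k)$ blows up. Therefore $S$ contains at most one point, and combined with $S\neq\emptyset$ we conclude $S=\{p\}$ for a single point $p$ (which, by renaming, is the limit point of the maximizers $x_k$).

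\textbf{Main obstacle.} The delicate part is handling the sign change of $h$ when passing from the blowup condition \eqref{defi-blowupset}, which is phrased with $|h|$, to the concentration hypothesis of Lemma \ref{lem-chenli}, which concerns the positive measure $e^{-h_\ell}e^{u_k}d\mu$; one must not accidentally use $\int_M he^{-h_\ell}e^{u_k}d\mu=1$ in a way that ignores cancellation. The two-sided bound of Lemma \ref{up-lower-bound} on the unsigned integral is exactly what rescues the argument, letting the mass lower bounds on $\Om_1,\Om_2$ translate into genuine mass \emph{fractions} bounded below by a fixed $\gamma_0$. Everything else — the covering argument for $S\neq\emptyset$, and the coercivity-versus-improved-Moser-Trudinger inequality comparison that kills two blowup points — is standard once the normalization is set up correctly.
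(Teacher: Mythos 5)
Your proposal follows essentially the same route as the paper: the finite-covering argument with Lemma \ref{lem-bm} to get $S\neq\emptyset$, and Lemma \ref{lem-chenli} combined with the two-sided bound of Lemma \ref{up-lower-bound} to rule out two distinct blowup points. Your normalization of the mass fractions (passing from the $|h|$-weighted lower bound in \eqref{defi-blowupset} to a fraction of the unsigned mass via $c_2$ and $\max_M|h|$) is exactly the paper's step, and your bookkeeping in the second stage — comparing the upper bound for $-\uk$ from the improved Moser--Trudinger inequality with the lower bound from $J_{\rho_k}(u_k)\leq C$ — is an equivalent rearrangement of the paper's computation; both force $\|\nabla u_k\|_{L^2(M)}\leq C$ and contradict blowup via Lemma \ref{lem-kehua}.

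The one place you are too quick is the final identification $S=\{p\}$. In this paper $p$ is not a free name for ``the blowup point'': it was fixed before Lemma \ref{lem-kehua} as the limit of the maximum points $x_k$ with $u_k(x_k)=\lam_k$, so after proving $\#S=1$ you still must show that the unique element of $S$ is that particular $p$; ``by renaming'' does not accomplish this. The missing (short) argument, which is the paper's third step, runs as follows: if $p\notin S$, then by \eqref{defi-blowupset} there exist $r>0$ and $\delta>0$ with $\int_{B_r(p)}|h|e^{-h_\ell+u_k}d\mu<\frac{1}{2(1+\al)}-\delta$, so Lemma \ref{lem-bm} gives $\|u_k-\uk\|_{L^\infty(B_{r/2}(p))}\leq C$; since $x_k\to p$, eventually $x_k\in B_{r/2}(p)$ and hence $\lam_k=u_k(x_k)\leq\uk+C\to-\infty$, contradicting $\lam_k\to+\infty$. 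With this paragraph added, your proof is complete and coincides with the paper's.
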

\begin{proof}
We divide the whole proof into three parts: (1) $S\neq\emptyset$;  (2) $\# S=1$; (3) $S = \{p\}$. In the following, we shall prove these three parts on by one.

(1) $S\neq\emptyset$. Suppose not, for every $x\in M$, there exists a positive number $r_x$ which is smaller then the injective radius of $M$, such that
\begin{align*}
\int_{B_{r_x}(x)} |h|e^{-h_{\ell}+u_k}d\mu < \frac{1}{2(1+\overline{\alpha})}.
\end{align*}
Then by Lemma \ref{lem-bm} we obtain that
\begin{align*}
\|u_k-\overline{u_k}\|_{L^{\infty}(B_{r_x/2}(x))}\leq C.
\end{align*}
This combining with a finite covering argument tells us that
$$\|u_k-\overline{u_k}\|_{L^{\infty}(M)}\leq C.$$
Since $(u_k)$ blows up, we have $\overline{u_k}\to-\infty$ by Lemma \ref{lem-kehua} and then we have $u_k\to-\infty$ as $k\to\infty$. This contradicts with Lemma \ref{up-lower-bound}.

(2) $\# S=1$. Suppose not, we must have $\# S\geq2$ since we have proved that $S\neq\emptyset$. Suppose $x_1\neq x_2 \in S$, then for sufficiently small $r$, one has
\begin{align*}
\frac{\int_{B_{r}(x_i)}e^{-h_{\ell}+u_k}d\mu}{\int_M e^{-h_{\ell}+u_k}d\mu}\geq \frac{1}{2(1+\overline{\alpha})c_2\|h\|_{L^{\infty}(M)}},~~i=1,2.
\end{align*}
Then by Lemma \ref{lem-chenli} we have for $\epsilon=\frac{1}{8\overline{\rho}}$, there exists a constant $C$ such that
\begin{align}\label{lem-blowupset_1}
\log\int_M e^{-h_{\ell}+u_k}d\mu
&\leq \frac{3}{8\overline{\rho}}\int_{M}|\nabla u_k|^2d\mu+\overline{u_k}+C\nonumber\\
&= \frac{3}{4\overline{\rho}}J_{\rho_k}(u_k)+\left(1-\frac{3\rho_k}{4\overline{\rho}}\right)\overline{u_k}+C\nonumber\\
&\to-\infty~~\text{as}~k\to\infty,
\end{align}
where we have used facts $J_{\rho_k}(u_k)$ is bounded and $\overline{u_k}\to\infty$ as $k\to\infty$. (\ref{lem-blowupset_1}) contradicts Lemma \ref{up-lower-bound}.
Therefore, $\# S=1$.

(3) $S = \{p\}$. Let us recall that $u_k(x_k)=\max\limits_{M}u_k$ and $x_k\to p$ as $k\to\infty$. If $p\notin S$, then by (\ref{defi-blowupset}) (the definition of $S$)
there exist $0<\delta<\frac{1}{2(1+\overline{\alpha})}$ and $r>0$ such that
\begin{align*}
\int_{B_r(p)}|h|e^{-h_{\ell}+u_k}d\mu<\frac{1}{2(1+\overline{\alpha})}-\delta.
\end{align*}
Then Lemma \ref{lem-bm} yields that
\begin{align*}
\|u_k-\overline{u_k}\|_{L^{\infty}(B_{r/2}(p))}\leq C.
\end{align*}
So we have by Lemma \ref{lem-kehua} that
\begin{align*}
u_k(x_k)\leq \overline{u_k}+C\to-\infty~~\text{as}~k\to\infty,
\end{align*}
this contradicts $u_k(x_k)=\lambda_k\to+\infty$ as $k\to\infty$. Therefore, $S=\{p\}$.
\end{proof}

\begin{lem}\label{lem-hp>0}
If $(u_k)$ blows up, then $h$ must be positive at the single blowup point, i.e., $h(p)>0$.
\end{lem}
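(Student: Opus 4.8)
The plan is to argue by contradiction: suppose $(u_k)$ blows up but $h(p)\le 0$ at the single blowup point $p=S$ furnished by Lemma \ref{lem-blowupset}. The strategy is to show that $h\le 0$ near $p$ forces the ``mass'' $\int_{B_r(p)}|h|e^{-h_\ell+u_k}\,d\mu$ away from the defining threshold $\frac1{2(1+\al)}$, contradicting that $p\in S$. Since $h(p)\le 0$ and $h$ is continuous, either $h\equiv 0$ on a neighborhood of $p$ or $h(p)<0$ or $h(p)=0$ with $h$ nonzero nearby; in all these subcases the key mechanism is the same. On the set where $h\le 0$, the right-hand side of \eqref{eq-uk}, namely $\rho_k(1-he^{-h_\ell}e^{u_k})$, is $\ge \rho_k>0$, so $\Delta u_k \ge \rho_k>0$ near $p$. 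Thus $u_k$ is \emph{subharmonic} on a small ball $B_{2r}(p)\subset M_+^c$ (more precisely, $u_k$ minus a fixed smooth function with Laplacian $\rho_k$ is subharmonic), which is incompatible with $u_k$ attaining its global maximum $\lambda_k$ at an interior point $x_k\to p$ of that ball — by the strong maximum principle $u_k$ would be constant, which it is not for $k$ large since $\|\nabla u_k\|_{L^2}\to\infty$ by Lemma \ref{lem-kehua}.

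To make this precise I would proceed as follows. First, fix $r>0$ small enough that $B_{2r}(p)$ lies inside the (open) set $\{h<0\}$ if $h(p)<0$; the remaining case $h(p)=0$ is handled at the end. On $B_{2r}(p)$ we have $-\Delta u_k = \rho_k(he^{-h_\ell}e^{u_k}-1)\le -\rho_k<0$, i.e. $\Delta u_k\ge \rho_k$. Let $w_k$ solve $\Delta w_k=\rho_k$ on $B_{2r}(p)$ with, say, $w_k=0$ on $\partial B_{2r}(p)$; then $\psi_k:=u_k-w_k$ satisfies $\Delta\psi_k\ge 0$ on $B_{2r}(p)$, so $\psi_k$ is subharmonic there, hence $\max_{\overline{B_r(p)}}\psi_k\le \max_{\partial B_{2r}(p)}\psi_k$. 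Since $w_k$ is uniformly bounded on $B_{2r}(p)$ (the source $\rho_k\le\bar\rho$ is bounded and the domain is fixed), we get $u_k(x)\le \sup_{\partial B_{2r}(p)}u_k + C$ for all $x\in B_r(p)$. But by Lemma \ref{lem-bm} combined with Lemma \ref{lem-blowupset}, on the annulus/boundary region away from the single blowup point $p$ one has $\|u_k-\uk\|_{L^\infty_{\rm loc}(M\setminus\{p\})}\le C$; in particular $\sup_{\partial B_{2r}(p)}u_k \le \uk+C$. Therefore $\lambda_k=u_k(x_k)\le \uk+C\to-\infty$ by Lemma \ref{lem-kehua}, contradicting $\lambda_k\to+\infty$. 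This kills the case $h(p)<0$.

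It remains to treat the borderline case $h(p)=0$. Here I would instead use the energy/Moser–Trudinger approach: the point is that $p\in S$ requires, by definition \eqref{defi-blowupset}, that the measure $|h|e^{-h_\ell+u_k}\,d\mu$ concentrates at $p$ with mass $\ge \frac1{2(1+\al)}$, yet the actual solution constraint lives with $he^{-h_\ell+u_k}$ (signed) and $u_k\in\mathcal H$ normalizes $\int_M he^{-h_\ell}e^{u_k}\,d\mu=1$. If $h(p)=0$, then for any $\eta>0$ there is $r>0$ with $|h|<\eta$ on $B_{2r}(p)$; a scaling/blow-up analysis of $u_k$ near $x_k$ (rescale by $\delta_k$ with $e^{\lambda_k}\delta_k^{2+2\alpha(p)}$ normalized, as in \cite{DJLW,Zhu17}) shows the rescaled functions converge to a solution of $\Delta U = -|h(p)|\,e^{U}$ times a constant on $\R^2$ (or to $\Delta U=0$ if $h(p)=0$), whose total mass is $8\pi(1+\alpha(p))\cdot\frac{|h(p)|}{|h(p)|}$ — but with $h(p)=0$ the limiting equation is $\Delta U\le 0$ with $U$ bounded above, so $U$ is constant and carries zero mass, again contradicting the concentration of mass $\frac1{2(1+\al)}>0$ at $p$. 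Thus $h(p)=0$ is impossible as well, and we conclude $h(p)>0$.

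The main obstacle is the borderline case $h(p)=0$: the clean maximum-principle argument of the second paragraph only uses $h<0$, and when $h$ merely vanishes at $p$ one genuinely needs the quantitative blow-up analysis (local mass quantization) to see that no positive mass can accumulate at a zero of $h$. This is exactly the technical heart already anticipated in \cite{YZ17}; adapting it to the singular setting requires tracking the conical weight $e^{-h_\ell}$ and the fact that $\alpha(p)\ge\al$, but the qualitative conclusion — mass cannot concentrate where $h$ is nonpositive — is robust. I would organize the write-up so that the $h(p)<0$ case is dispatched by the subharmonicity argument and the $h(p)=0$ case by a short local blow-up lemma, possibly deferring the full rescaling to the next section where the sharp lower bound is computed anyway.
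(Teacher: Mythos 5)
Your case split is where the trouble lies. The subharmonicity/maximum-principle argument for $h(p)<0$ is sound (and is a genuinely different route from the paper's), but the borderline case $h(p)=0$ is not actually proved: it is deferred to a ``short local blow-up lemma'' that is never carried out, and the sketch you give contains real errors. The quantity $8\pi(1+\alpha(p))\cdot\frac{|h(p)|}{|h(p)|}$ is $0/0$ precisely in the case you are treating; and the assertion that $\Delta U\le 0$ together with $U$ bounded above forces $U$ to be constant is false on $\R^2$ (take $U=-|x|^2$) --- you would need the limit to be exactly harmonic and bounded above to invoke Liouville, which itself requires justifying convergence of the rescaled equation when its right-hand side degenerates. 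Even granting that the bubble limit is trivial and carries zero mass, that only controls $\int_{B_{Rr_k}(x_k)}|h|e^{-h_{\ell}+u_k}d\mu$, not $\lim_{r\to0}\lim_k\int_{B_r(p)}$: a priori the mass could sit in the neck $B_r(p)\setminus B_{Rr_k}(x_k)$, and nothing in your sketch excludes this. Note also that $h(p)=0$ has a one-line refutation you did not use: by Lemma \ref{up-lower-bound}, $\int_{B_r(p)}e^{-h_{\ell}+u_k}d\mu\le c_2$ uniformly in $k$ and $r$, so if $h(p)=0$ one may pick $r$ with $\sup_{B_r(p)}|h|<\frac{1}{2(1+\al)c_2}$, giving $\int_{B_r(p)}|h|e^{-h_{\ell}+u_k}d\mu<\frac{1}{2(1+\al)}$ and hence $p\notin S$, contradicting Lemma \ref{lem-blowupset}.

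The paper dispatches both cases at once, much more cheaply, by using the one piece of information your proposal never touches: the constraint $u_k\in\mathcal{H}$, i.e. $\int_M he^{-h_{\ell}}e^{u_k}d\mu=1$. Since $S=\{p\}$, Lemmas \ref{lem-bm} and \ref{lem-kehua} give $u_k\le\uk+C\to-\infty$ locally uniformly off $p$, hence $\lim_{r\to0}\lim_{k}\int_{M\setminus B_r(p)}e^{-h_{\ell}+u_k}d\mu=0$; combined with the normalization and the continuity of $h$ (freezing $h$ at $p$, with the error controlled by the uniform bound $c_2$), this yields $h(p)\cdot L=1$ for $L=\lim_{r\to0}\lim_k\int_{B_r(p)}e^{-h_{\ell}+u_k}d\mu\in(0,c_2]$, so $h(p)=1/L>0$. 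Your maximum-principle argument is an attractive alternative for strictly negative $h(p)$, but as written the proof is incomplete at exactly the point you yourself identify as the technical heart.
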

\begin{proof}
It follows from Lemmas \ref{lem-bm} and \ref{lem-blowupset} that
$$\lim_{r\to0}\lim_{k\to\infty}\int_{M\setminus B_r(p)}e^{-h_{\ell}+u_k}d\mu=0$$
and then
\begin{align*}
h(p)\lim_{r\to0}\lim_{k\to\infty}\int_{B_r(p)}e^{-h_{\ell}+u_k}d\mu=1.
\end{align*}
So $h(p)>0$ and we finish the proof of the lemma.
\end{proof}


Recall that the author \cite{Zhu17} proved, if $(u_k)$ blows up, it must blow up at the point where
the conical angle is the smallest, i.e., $\alpha(p)=\overline{\alpha}$.
Now, let us recall more results in \cite{Zhu17}.
We choose an isothermal coordinate system around $p$, denote $r_k=e^{-\frac{\lambda_k}{2(1+\overline{\alpha})}}$, then
\begin{align}\label{eq-bubble}
\varphi_k(x):=u_k(x_k+r_kx)-\lambda_k\to\varphi(x)=-2\log\left(1+\frac{\pi}{1+\overline{\alpha}}H(p)|x|^{2(1+\overline\alpha)}\right),
\end{align}
in $C_{\text{loc}}^1(\mathbb{R}^2)$ if $\overline{\alpha}=0$ and in $C_{\text{loc}}^1(\mathbb{R}^2\setminus\{0\})\cap C^0_{\text{loc}}(\mathbb{R}^2)\cap W^{2,s}_{\text{loc}}(\mathbb{R}^2)$ for every $s\in(1,-1/\overline{\alpha})$ if $\overline{\alpha}<0$ as $k\to\infty$,
where $$H(p)=h(p)e^{-4\pi A(p)}\prod_{1\leq i\leq\ell, p_i\neq p}e^{-4\pi\alpha_iG_{p_i}(p)}.$$
Calculate directly, one has
\begin{align}\label{int=1}
\lim_{R\to+\infty}\lim_{k\to\infty}\int_{B_{Rr_k}(x_k)}he^{-h_{\ell}+u_k}d\mu=\int_{\mathbb{R}^2}H(p)|x|^{2\overline{\alpha}}e^{\varphi_0}dx=1.
\end{align}
Since $S=\{p\}$, for any $x\in M\setminus\{p\}$, there exists a $\gamma_x\in(0,1/2)$ and a small $r_x\in(0,\frac{1}{2(1+\overline{\alpha})}\text{dist}(x,p))$ such that
$$\int_{B_{r_x}(x)}|h|e^{-h_{\ell}+u_k}d\mu<\frac{1}{2(1+\overline{\alpha})}-\gamma_x.$$
By Lemma \ref{lem-bm}, $\|u_k-\overline{u_k}\|_{L^{\infty}_{B_{r_x/2}(x)}}\leq C$, then by Lemma \ref{lem-kehua} we have
$u_k(x)\leq C+\overline{u_k}\to-\infty$ as $k\to\infty$. So for any $\Omega\subset\subset M\setminus\{p\}$, there holds
\begin{align}\label{int=0}
\int_{\Omega}|h|e^{-h_{\ell}+u_k}d\mu\to0~~\text{as}~~k\to\infty.
\end{align}
By (\ref{int=1}) and (\ref{int=0}) we get that $he^{-h_{\ell}+u_k}$ converges to $\delta_p$ in the sense of measure. Therefore $u_k-\overline{u_k}\to \overline{\rho}G_p(x)$ weakly in $W^{1,q}(M)$ for any $1<q<2$, where $G_p$ is the Green function satisfying (\ref{eq-green}), since $G_p$ is the only solution of (\ref{eq-green}) in $W^{1,q}(M)$. Lemma \ref{lem-bm}
and (\ref{int=0}) yield that for any $\Omega\subset\subset M\setminus\{p\}$,
\begin{align*}
\|u_k-\overline{u_k}\|_{L^{\infty}(\Omega)}\leq C.
\end{align*}
This inequality together with the standard elliptic estimates yields that
\begin{align}\label{convergence}
u_k-\overline{u_k}\to \rh G_p~~\text{in}~~C_{\text{loc}}^{\gamma}(M\setminus\{p\})\cap W^{1,s}(M)~~\text{as}~~k\to\infty
\end{align}
for some $\gamma\in(0,1)$ and $s>2$.

\section{Lower bound of $J_{\rh}$ when $(u_k)$ blows up}

Based on the prepared work in the former section, we could estimate the explicit lower bound of $J_{\rh}$ in this section.

Since the asymptotic phenomenons are different, we divide the integral $\int_M|\nabla u_k|^2$ into three parts. Namely,
\begin{align}\label{est-integral}
\int_M|\nabla u_k|^2  = \int_{M\setminus B_{\delta}(x_k)}|\nabla u_k|^2  + \int_{B_{\delta}(x_k)\setminus B_{Rr_k}(x_k)}|\nabla u_k|^2 +\int_{B_{Rr_k}(x_k)}|\nabla u_k|^2.
\end{align}
For the first term in the right-hand side of (\ref{est-integral}), we have
\begin{align}\label{est-int-out}
\int_{M\setminus B_{\delta}(x_k)}|\nabla u_k|^2
=&\int_{M\setminus B_{\delta}(p)}|\nabla G_p|^2+o_k(1)\nonumber\\
=&-\frac{1}{2\pi}\rh^2\log\delta+\rh^2A(p)+o_k(1)+o_\delta(1),
\end{align}
where and in the following we use $o_k(1)$ (resp. $o_R(1)$; $o_\delta(1)$) to denote the terms which tend to $0$ as $k\to\infty$
 (resp. $R\to\infty$; $\delta\to0$).

For the third term in the right-hand side of (\ref{est-integral}), we have
\begin{align}\label{est-int-in}
\int_{B_{Rr_k}(x_k)}|\nabla u_k|^2
=&\int_{\mathbb{B}_{R(0)}}|\nabla_{\R^2}\varphi|^2+o_k(1)\nonumber\\
=&2\rh\log(1+\frac{\pi H(p)}{1+\overline{\alpha}}R^{2(1+\overline{\alpha})})-2\rh +o_k(1)+o_R(1).
\end{align}

We use the capacity method, to estimate the second term in the right-hand side of (\ref{est-integral}).
This method was first used by Li \cite{Li01} to estimate the neck-part of the integral $\int_M|\nabla u_k|^2$ when he studied the Moser-Trudinger inequality
in dimension two.
Set
\begin{align*}
u_k^*(r) = \frac{1}{2\pi}\int_0^{2\pi}u_k(x_k+re^{i\theta})d\theta.
\end{align*}
Then it follows by (\ref{convergence}) and (\ref{eq-bubble}) that
\begin{align*}
u_k^*(\delta) =& \uk+\rh(-\frac{1}{2\pi}\log\delta+A(p))+o_k(1)+o_{\delta}(1).\\
u_k^*(Rr_k)=& \lam_k-2\log(1+\frac{\pi H(p)}{1+\overline{\alpha}}R^{2(1+\overline{\alpha})})+o_k(1)+o_R(1).
\end{align*}
Suppose $w_k$ solves
\begin{align*}
\begin{cases}
\Delta_{\R^2}w_k=0~~&\text{in}~\mathbb{B}_{\delta}(0)\setminus\mathbb{B}_{Rr_k}(0),\\
w_k(x)=u_k^*(x)~~&\text{on}~\partial \left(\mathbb{B}_{\delta}(0)\setminus\mathbb{B}_{Rr_k}(0)\right),
\end{cases}
\end{align*}
then
$$w_k(x)=\frac{u_k^*(\delta)\left(\log|x|-\log(Rr_k)\right)+u_k^*(Rr_k)\left(\log\delta-\log|x|\right)}{\log\delta-\log(Rr_k)}.$$
So we have
\begin{align}\label{est-int-neck}
&\int_{B_{\delta}(x_k)\setminus B_{Rr_k}(x_k)}|\nabla u_k|^2\no\\
\geq& \int_{\mathbb{B}_{\delta}(0)\setminus \mathbb{B}_{Rr_k}(0)} |\nabla_{\R^2}w_k(x)|dx\no\\
=&2\pi\frac{\left(u_k^*(\delta)-u_k^*(Rr_k)\right)^2}{\log\delta-\log(Rr_k)}\no\\
=&2\pi\frac{\left(\uk-\lam_k+\frac{\rh}{2\pi}\log\frac{R}{\delta}+\rh A(p)+2\log\frac{\pi H(p)}{1+\al}+o(1)\right)^2}{\frac{\lam_k}{2(1+\al)}+\log\frac{R}{\delta}}.
\end{align}
Combining (\ref{est-int-out}), (\ref{est-int-in}) and (\ref{est-int-neck}) with the fact that $J_{\rho_k}(u_k)=\inf_{\mathcal{H}}J_{\rho_k}$ is bounded, we obtain that
\begin{align}\label{est_1}
J_{\rho_k}(u_k)\geq&\frac{\rh^2}{2}A(p)+\rh\log\frac{\pi H(p)}{1+\al}-\rh+\frac{\rh^2}{4\pi}\log\frac{R}{\delta}+\rho_k\uk\no\\
&+\pi\frac{\left(\uk-\lam_k+\frac{\rh}{2\pi}\log\frac{R}{\delta}+\rh A_p+2\log\frac{\pi H(p)}{1+\al}+o(1)\right)^2}{\frac{\lam_k}{2(1+\al)}+\log\frac{R}{\delta}}
\end{align}
is bounded, dividing the quantity on the right-hand side of \eqref{est_1} by $\lam_k$ and letting $k$ tend to $\infty$ one arrives at
\begin{align*}
\lim_{k\to\infty}\left(\frac{\uk}{\lam_k}-1+\frac{\frac{\rh}{2\pi}\log\frac{R}{\delta}+\rh A(p)+2\log\frac{\pi H(p)}{1+\al}+o(1)}{\lam_k}+\frac{2\rho_k}{\rh}\right)^2=0.
\end{align*}
Here and in the following we use $o(1)$ to denote terms which tend to $0$ as $k\to\infty$ first and then $R\to\infty$ and $\delta\to0$.
Hence
\begin{align*}
\uk = (1-\frac{2\rho_k}{\rh})\lam_k-\left(\frac{\rh}{2\pi}\log\frac{R}{\delta}+\rh A(p)+2\log\frac{\pi H(p)}{1+\al}+o(1)\right).
\end{align*}
Taking this into (\ref{est_1}) and calculating directly we have
\begin{align*}
J_{\rho_k}(u_k)\geq&-\frac{\rh^2}{2}A(p)-\rh\log\frac{\pi H(p)}{1+\al}-\rh\\
&+\frac{\rho_k}{\rh}(\rh-\rho_k)\lam_k+\frac{1}{4\pi}(\rh-\rho_k)^2\log\frac{R}{\delta}+o(1).
\end{align*}
Then similar as Lemma 2.10 in \cite{DJLW}
we have when $(u_k)$ blows up,
\begin{align}\label{est_2}
\inf_{\mathcal{H}}J_{\rh}(u)\geq&\lim_{\delta\to0}\lim_{R\to\infty}\lim_{k\to\infty}J_{\rho_k}(u_k)
\geq-\frac{\rh^2}{2}A(p)-\rh\log\frac{\pi H(p)}{1+\al}-\rh\no\\
\geq&-\rh\left(1+\log\frac{\pi}{1+\al}\right)\nonumber\\
    &-\rh\max_{p\in M_+, \alpha(p)=\al}\left(4\pi A(p)+\log\left(h(p)\prod_{1\leq i\leq\ell,p_i\neq p}e^{-4\pi\alpha_i G_{p_i}(p)}\right)\right).
\end{align}

\section{The blowup sequence}

Since we have derived an explicit lower bound of $J_{\rh}$ when $(u_k)$ blows up in the former section, the successful experiences in \cite{Sch84, ES86, DJLW} tell us that,
if we can construct a blowup sequence $\phi_\ep$ which can make $J_{\rh}(\phi_\ep)<\Lambda_{\rh}$ for sufficiently small $\ep>0$, then the blowup will not happen and $J_{\rh}$ attains is infimum.

Let $p\in M$ be such that $\alpha(p)=\overline{\alpha}$ and
\begin{align*}
&4\pi A(p)+\log\left(h(p)\prod_{1\leq i\leq\ell,p_i\neq p}e^{-4\pi\alpha_i G_{p_i}(p)}\right)\nonumber\\
=&\max_{q\in M_+, \alpha(q)=\al}\left(4\pi A(q)+\log\left(h(q)\prod_{1\leq i\leq\ell,p_i\neq q}e^{-4\pi\alpha_i G_{p_i}(q)}\right)\right).
\end{align*}
Let $(\Om;(x_1,x_2))$ be an isothermal coordinate system around $p$ and set
$$r(x_1,x_2)=\sqrt{x_1^2+x_2^2},~~~~\text{and}~~B_{\delta}(p)=\{(x_1,x_2): r(x_1,x_2)< \delta\}.$$
We write near $p$ the metric
$$g|_{\Om}=e^{\psi(x_1,x_2)}(dx_1^2+dx_2^2)$$
with $\psi(x_1,x_2)=O(r)~(r\to0)$. It is well known that
\begin{align*}
|\nabla u|^2d\mu=|\nabla u|^2dx_1dx_2~~~\text{and}~~~
\frac{\p u}{\p n}ds_g=\frac{\p u}{\p r}rd\theta~~\text{on}~\p B_{r}(p).
\end{align*}

Denote $\ga=\frac{\ep^{-\frac{1}{2(1+\al)}}}{-\log\ep}$ and $r_{\epsilon}:=\gamma_{\epsilon}{\epsilon}^{\frac{1}{2(1+\al)}}$. We define
\begin{align*}
\phi_\epsilon=
\begin{cases}
-2\log\left(\epsilon+r^{2(1+\overline{\alpha})}\right)+\log\ep~~~&\text{if}~r\leq r_\epsilon,\\
\overline{\rho}\left(G_p-\eta\sigma\right)+C_{\epsilon}+\log\epsilon~~~&\text{if}~r\geq r_{\epsilon},
\end{cases}
\end{align*}
where $r=\text{dist}(x,p)$, $\eta\in C^{1}_0(B_{2r_\ep}(p))$ is a radial cutoff function which satisfies $\eta\equiv1$ in $B_{r_\ep}(p)$ and $\mid\nabla\eta\mid\leq Cr_\ep^{-1}$, and
$$C_\epsilon = -2\log\frac{1+\gamma_{\epsilon}^{2(1+\overline{\alpha})}}{\gamma_{\epsilon}^{2(1+\overline{\alpha})}} - \overline{\rho}A(p).$$


Now, by direct calculations
\begin{align}\label{phi-in}
\int_{B_{r_\ep}(p)}|\nabla \phi_{\ep}|^2 d\mu=& 2\rh \log(1+\ga^{2(1+\al)})-2\rh+o_{\ep}(1).
\end{align}

\begin{align}\label{phi-out}
\int_{M\setminus B_{r_\ep}(p)}|\nabla\phi_\ep|^2d\mu
=&\rh^2\int_{M\setminus B_{r_\ep}(p)}|\nabla(G_p-\eta\beta)|^2d\mu\no\\
=&\rh^2\int_{M\setminus B_{r_\ep}(p)}|\nabla G_p|^2d\mu + \rh^2\int_{B_{2r_{\ep}}(p)\setminus B_{r_\ep}(p)}|\nabla(\eta\beta)|^2d\mu\no\\
 &-2\rh^2\int_{B_{2r_\ep}(p)\setminus B_{r_\ep}(p)}\nabla G_p\cdot\nabla(\eta\beta) d\mu.
\end{align}
Do calculations, one has directly
\begin{align}\label{phi-out-1}
\int_{M\setminus B_{r_\ep}(p)}|\nabla G_p|^2d\mu
=&-\int_{M\setminus B_{r_\ep}(p)}G_p\Delta G_p d\mu-\int_{\p B_{r_\ep}(p)}G_p\frac{\p G_p}{\p n}ds_g\no\\
=&-\frac{1}{4\pi}\log r_{\ep}^2+A(p)+o_{\ep}(1),
\end{align}
\begin{align}\label{phi-out-2}
\int_{B_{2r_\ep(p)}\setminus B_{r_\ep}(p)}|\nabla(\eta\sigma)|^2d\mu=o_{\ep}(1)
\end{align}
and
\begin{align}\label{phi-out-3}
&-2\int_{B_{2r_\ep(p)}\setminus B_{r_\ep}(p)}\nabla G_p\cdot\nabla(\eta\beta) d\mu\no\\
=&\int_{B_{2r_\ep(p)}\setminus B_{r_\ep}(p)} \eta\sigma \Delta G_p d\mu-\int_{\p \left(B_{2r_\ep(p)}\setminus B_{r_\ep}(p)\right)} \eta\sigma \frac{\p G_p}{\p n}ds_g\no\\
=&\int_{B_{2r_\ep(p)}\setminus B_{r_\ep}(p)} \eta\sigma  d\mu + \int_{\p B_{r_\ep}(p)} \sigma \frac{\p G_p}{\p r} ds_g = o_\ep(1).
\end{align}
Substituting (\ref{phi-out-1})-(\ref{phi-out-3}) into \eqref{phi-out} and then together with \eqref{phi-in} we have
\begin{align}\label{phi-last}
\int_{M}|\nabla\phi_\ep|^2d\mu = -2\rh\log\ep-2\rh+\rh^2 A(p)+o_\ep(1).
\end{align}
Calculating directly, one has
\begin{align}\label{mean-in}
&\int_{B_{r_\ep}(p)}-2\log(\ep+r^{2(1+\al)})d\mu\no\\
=&-2\pi r_{\ep}^2\log(\ep+r_{\ep}^{2(1+\al)})
 +\frac{\rh}{2}\int_0^{r_{\ep}}\frac{r^{3+2\al}}{\ep+r^{2(1+\al)}}dr\no\\
  &+ O(r_{\ep}^4\log(\ep+r_{\ep}^{2(1+\al)}))\no\\
=&o_{\ep}(1)
\end{align}
since
\begin{align*}
0<\int_0^{r_{\ep}}\frac{r^{3+2\al}}{\ep+r^{2(1+\al)}}dr\leq\frac{1}{2}r_\ep^2.
\end{align*}
It is clear that
\begin{align}\label{mean-out}
 &\int_{M\setminus B_{r_\ep}(p)}\phi_\ep d\mu\no\\
=&\rh \int_{M\setminus B_{r_\ep}(p)}G_p -\rh \int_{B_{2r_\ep}(p)\setminus B_{r_\ep}(p)}\eta\sigma + C_{\ep}(1-\text{Vol}(B_{r_\ep}(p)))\no\\
 &+\log\ep(1-\text{Vol}(B_{r_\ep}(p)))\no\\
=&C_{\ep}(1-\text{Vol}(B_{r_\ep}(p)))+\log\ep(1-\text{Vol}(B_{r_\ep}(p)))+o_{\ep}(1).
\end{align}
By combining \eqref{mean-in} and \eqref{mean-out}, we obtain that
\begin{align}\label{mean}
 &\int_{M}\phi_\ep d\mu = \log\ep-\rh A(p)+o_{\ep}(1).
\end{align}
We have
\begin{align}\label{log-in}
&\int_{B_{r_\ep}(p)} e^{-4\pi\al G_p}e^{\phi_\ep} d\mu\no\\
=&e^{-4\pi\al A(p)}\int_0^{2\pi}\int_0^{r_\ep} \frac{\ep r^{2\al}}{(\ep+r^{2(1+\al)})^2}e^{-4\pi\al\sigma+\psi} rdrd\theta\no\\
=&e^{-4\pi\al A(p)}\int_0^{r_\ep} \frac{\ep r^{2\al}}{(\ep+r^{2(1+\al)})^2}(2\pi+O(r^2)) rdr\no\\
=&\frac{\pi}{1+\al}e^{-4\pi\al A(p)}\frac{\ga^{2(1+\al)}}{1+\ga^{2(1+\al)}} + \pi e^{-4\pi\al A(p)}O\left(\int_0^{r_\ep}\frac{\ep r^{3+2\al}}{(\ep+r^{2(1+\al)})^2}dr\right)\no\\
=&\frac{\pi}{1+\al}e^{-4\pi\al A(p)}+o_{\ep}(1),
\end{align}
where we have used
\begin{align*}
\int_{0}^{2\pi} e^{-4\pi\al\sigma+\psi} d\theta = 2\pi+O(r^2)
\end{align*}
and
\begin{align*}
0<\int_0^{r_{\ep}}\frac{r^{3+2\al}}{(\ep+r^{2(1+\al)})^2}dr\leq\frac{1}{-2\al}r_\ep^{-2\al}.
\end{align*}
in the second equality and the last equality respectively.

By choosing $\delta>0$ sufficiently small we can make the expansion of $G_p$ hold in $B_{\delta}(p)$, then
\begin{align}\label{log-out}
&\int_{M\setminus B_{r_{\ep}(p)}}e^{-4\pi\al G_p}e^{\phi_\ep}d\mu\no\\
=&\ep\int_{M\setminus B_{\delta}(p)} e^{4\pi(2+\al)G_p+C_\ep}d\mu
+\ep\int_{B_\delta(p)\setminus B_{r_{\ep}}(p)}e^{4\pi(2+\al)G_p-\rh \eta \sigma+C_\ep}d\mu\no\\
=&\ep e^{C_\ep} \int_{r_\ep}^{\delta} r^{-2(2+\al)}(2\pi+O(r^2))rdr+o_\ep(1)\no\\
=&o_{\ep}(1),
\end{align}
where we have used the fact that $\gamma^{-2(1+\al)}=o_{\ep}(1)$ in the last equality.
Combining \eqref{log-out} with \eqref{log-in} we have
\begin{align*}
\int_{M}e^{-4\pi\al G_p}e^{\phi_\ep}=\frac{\pi}{1+\al}e^{-4\pi\al A(p)}+o_\ep(1).
\end{align*}
It is clear that
\begin{align*}
\int_M he^{-h_{\ell}}e^{\phi_\ep}
 =& h(p)\prod_{1\leq i\leq\ell, p_i\neq p}e^{-4\pi\alpha_iG_{p_i}(p)}\int_{M}e^{-4\pi\al G_p}e^{\phi_\ep}\\
  &+\int_{M}\left(h\prod_{1\leq i\leq\ell, p_i\neq p}e^{-4\pi\alpha_iG_{p_i}}-h(p)\prod_{1\leq i\leq\ell, p_i\neq p}e^{-4\pi\alpha_iG_{p_i}(p)}\right)e^{-4\pi\al G_p}e^{\phi_\ep}
\end{align*}
By direct calculations, we have
\begin{align*}
&\int_{B_{r_\ep}}\left(h\prod_{1\leq i\leq\ell, p_i\neq p}e^{-4\pi\alpha_iG_{p_i}}-h(p)\prod_{1\leq i\leq\ell, p_i\neq p}e^{-4\pi\alpha_iG_{p_i}(p)}\right)e^{-4\pi\al G_p}e^{\phi_\ep}\\
=&O(1)\int_0^{r_\ep}\frac{\ep r^{3+2\al}}{(\ep+r^{2(1+\al)})^2}dr+o_{\ep}(1)=o_{\ep}(1)
\end{align*}
and
\begin{align*}
&\int_{M\setminus B_{r_\ep}(p)}\left(h\prod_{1\leq i\leq\ell, p_i\neq p}e^{-4\pi\alpha_iG_{p_i}}-h(p)\prod_{1\leq i\leq\ell, p_i\neq p}e^{-4\pi\alpha_iG_{p_i}(p)}\right)e^{-4\pi\al G_p}e^{\phi_\ep}\\
=&\int_{B_{\delta}(p)\setminus B_{r_\ep}(p)}\left(h\prod_{1\leq i\leq\ell, p_i\neq p}e^{-4\pi\alpha_iG_{p_i}}-h(p)\prod_{1\leq i\leq\ell, p_i\neq p}e^{-4\pi\alpha_iG_{p_i}(p)}\right)e^{-4\pi\al G_p}e^{\phi_\ep}\\
&+\int_{M\setminus B_{\delta}(p)}\left(h\prod_{1\leq i\leq\ell, p_i\neq p}e^{-4\pi\alpha_iG_{p_i}}-h(p)\prod_{1\leq i\leq\ell, p_i\neq p}e^{-4\pi\alpha_iG_{p_i}(p)}\right)e^{-4\pi\al G_p}e^{\phi_\ep}\\
=&o_{\ep}(1).
\end{align*}
Hence
\begin{align*}
\int_{M}\left(h\prod_{1\leq i\leq\ell, p_i\neq p}e^{-4\pi\alpha_iG_{p_i}}-h(p)\prod_{1\leq i\leq\ell, p_i\neq p}e^{-4\pi\alpha_iG_{p_i}(p)}\right)e^{-4\pi\al G_p}e^{\phi_\ep}=o_{\ep}(1).
\end{align*}
Therefore, we have
\begin{align*}
&\int_M he^{-h_{\ell}}e^{\phi_\ep}\\
=& h(p)\prod_{1\leq i\leq\ell, p_i\neq p}e^{-4\pi\alpha_iG_{p_i}(p)}\int_{M}e^{-4\pi\al G_p}e^{\phi_\ep}\\
  &+\int_{M}\left(h\prod_{1\leq i\leq\ell, p_i\neq p}e^{-4\pi\alpha_iG_{p_i}}-h(p)\prod_{1\leq i\leq\ell, p_i\neq p}e^{-4\pi\alpha_iG_{p_i}(p)}\right)e^{-4\pi\al G_p}e^{\phi_\ep}\\
=&h(p)\prod_{1\leq i\leq\ell, p_i\neq p}e^{-4\pi\alpha_iG_{p_i}(p)}\left[\frac{\pi}{1+\al}e^{-4\pi\al A(p)}+o_{\ep}(1)\right].
\end{align*}
Then
\begin{align}\label{log-last}
\log\int_M he^{-h_{\ell}}e^{\phi_\ep}=\log\left(\frac{\pi}{1+\al}e^{-4\pi\al A(p)}h(p)\prod_{1\leq i\leq\ell, p_i\neq p}e^{-4\pi\alpha_iG_{p_i}(p)}\right)+o_{\ep}(1).
\end{align}
We put \eqref{phi-last}, \eqref{mean} and \eqref{log-last} together and obtain that
\begin{align}\label{bound}
\lim_{\ep\to0}J_{\rh}(\phi_\ep)
=&-\rh\left(1+\log\frac{\pi}{1+\al}\right)\nonumber\\
    &-\rh\max_{q\in M_+, \alpha(q)=\al}\left(4\pi A(q)+\log\left(h(q)\prod_{1\leq i\leq\ell,p_i\neq q}e^{-4\pi\alpha_i G_{p_i}(q)}\right)\right).
\end{align}

\begin{rem} If the reader is familiar with \cite{DJLW}, it is easy to ask that,
suppose $(u_k)$ blows up at some $p$ with $h(p)>0$ and $\alpha(p)=\al$, can we add condition on $h$ at $p$ like what were done by Ding-Jost-Li-Wang
to derive a sufficient condition for the existence of Eq. \eqref{eq-u}? In fact,
since of the conical singularities, the blow up (if happens) must at the most singular point $p$. This is the first thing that prevents us to using  the value of
$\Delta\log h(p)$ to given a sufficient condition; the second thing can be noticed by dedicate calculations like in \cite{DJLW}, which can be interpreted by
\begin{align*}
\int r^{-1}dr = \log r+c\to-\infty~~~\text{and}~~~\int r^{-1-2\al}dr = r^{-2\al}+c\to 0
\end{align*}
as $r\to0$ (for fixed $c$).
\end{rem}

\section{Complement of the proofs of Theorem \ref{thm-zhu-1} and \ref{thm-zhu-2}}
In this last section, we complete the proofs of our main theorems.

It is easy to see that Theorem \ref{thm-zhu-1} follows from \eqref{est_2} and \eqref{bound} directly. Notice that, either (i) or (ii) in Theorem \ref{thm-zhu-2} holds, we know form
Theorem \ref{thm-zhu-1} and its proof that $(u_k)$ does not blow up, then $J_{\rh}$ attains its infimum and Eq. (\ref{eq-u}) has a solution.  $\hfill{\square}$

\vspace{2cm}

\textbf{Acknowledgement} The main part of this paper was finished when the author visited School of Mathematics Science and China-France Mathematics Center at University of Science and Technology of China. He would like to thank them for their enthusiasm and the excellent working conditions they supplied for him.


\begin{thebibliography}{99}

\bibitem{BGJM2019} Bartolucci, D., Gui, C., Jevnikar, A., Moradifam, A., A singular sphere covering inequality: uniqueness and symmetry
              of solutions to singular {L}iouville-type equations, Math. Ann. 374 (2019), no. 3-4, 1883--1922.

\bibitem{BT} Bartolucci, D., Tarantello, G., Liouville type equations with singular data and their
              applications to periodic multivortices for the electroweak theory, Comm. Math. Phys 229 (2002), no. 1, 3--47.

\bibitem{BM} Brezis, H.,  Merle, F., Uniform estimates and blow-up behavior of solutions of $-\Delta u=V(x)e^{u}$ in two dimensions,
   Comm. Partial Differential Equations 16 (1991), no. 8-9, 1223--1253.

\bibitem{CY1995} Caffarelli, L., Yang, Y., Vortex condensation in the {C}hern-{S}imons {H}iggs model: an
              existence theorem, Comm. Math. Phys. 168 (1995), no. 2, 321--336.

\bibitem{Ca15} Cast\'{e}ras, J.-B., A mean field type flow {II}: {E}xistence and convergence, Pacific J. Math. 276 (2015), no. 2, 321--345.

\bibitem{CWWX2015} Chen, Q., Wang, W., Wu, Y., Xu, B.,
     Conformal metrics with constant curvature one and finitely many conical singularities on compact {R}iemann surfaces,
Pacific J. Math. 273 (2015), no. 1, 75--100.

\bibitem{Chen1990} Chen, W., A {T}r\"{u}dinger inequality on surfaces with conical sigularities, Proc. Amer. Math. Soc. 108 (1990), no. 3, 821--832.

\bibitem{CL} Chen, W., Li, C., Classification of solutions of some nonlinear elliptic equations, Duke Math. J. 63 (1991), no. 3, 615--622.

\bibitem{CL-91-JGA} Chen, W., Li, C., Prescribing {G}aussian curvatures on surfaces with conical singularities, J. Geom. Anal. 1 (1991), no. 4, 359--372.

\bibitem{DDI2018} D'Aprile, T., De Marchis, F., Ianni, I., Isabella, Prescribed {G}auss curvature problem on singular surfaces, Calc. Var. Partial Differential Equations 57 (2018), no. 4, Paper No. 99.

\bibitem{De2018} Dey, S., Spherical metrics with conical singularities on 2-spheres, Geom. Dedicata 196 (2018), 53--61.

\bibitem{DJLW} Ding, W., Jost, J., Li, J., Wang, G., The differential equation $\Delta u= 8\pi-8\pi he^u$ on a compact {R}iemann surface,
   Asian J. Math. 1 (1997), no. 2, 230--248.

\bibitem{Er2004} Eremenko, A., Metrics of positive curvature with conic singularities on the
              sphere, Proc. Amer. Math. Soc. 132 (2004), no. 11, 3349--3355.

\bibitem{Er2020}  Eremenko, A., Metrics of constant positive curvature with four conic
              singularities on the sphere, Proc. Amer. Math. Soc. 148 (2020), no. 9, 3957--3965.

\bibitem{ES86} Escobar, J., Schoen, R., Conformal metrics with prescribed scalar curvature, Invent. Math. 86 (1986), no. 2, 243--254.

\bibitem{HKP1990} Hong, J., Kim, Y., Pac, P., Multivortex solutions of the abelian {C}hern-{S}imons-{H}iggs
              theory, Phys. Rev. Lett. 64 (1990), no. 19, 2230--2233.

\bibitem{JW1990} Jackiw, R., Weinberg, E., Self-dual {C}hern-{S}imons vortices, Phys. Rev. Lett. 64 (1990), no. 19, 2234--2237.

\bibitem{KW74}  Kazdan, J., Warner, F., Curvature functions for compact {$2$}-manifolds, Ann. of Math. (2) 99 (1974), 14--47.

\bibitem{Ka2023} Kalvin, V., Determinants of {L}aplacians for constant curvature metrics
              with three conical singularities on the 2-sphere, Calc. Var. Partial Differential Equations 62 (2023), no. 2, Paper No. 59.

\bibitem{Lai2016} Lai, M., Metric aspects of conic surfaces, Front. Math. China 11 (2016), no. 5, 1291--1312.

\bibitem{Li08} Li, J., Variational approach to various nonlinear problems in geometry
              and physics, Front. Math. China 3 (2008), no. 2, 205--220.

\bibitem{LZ19} Li, J., Zhu, C., The convergence of the mean field type flow at a critical case, Calc. Var. Partial Differential Equations
         58 (2019), no. 2, Paper No. 60.

\bibitem{LX2022} Li, M., Xu, X., A flow approach to mean field equation, Calc. Var. Partial Differential Equations 61 (2022), no. 4, Paper No. 143.

\bibitem{Li01} Li, Y., Moser-Trudinger inequality on compact {R}iemannian manifolds of dimension two, J. Partial Differential Equations 14 (2001), no. 2, 163--192.

\bibitem{LT92} Luo, F., Tian, G., Liouville equation and spherical convex polytopes, Proc. Amer. Math. Soc. 116 (1992), no. 4, 1119--1129.

\bibitem{MR2011} Malchiodi, A., Ruiz, D., New improved {M}oser-{T}rudinger inequalities and singular
              {L}iouville equations on compact surfaces, Geom. Funct. Anal. 21 (2011), no. 5, 1196--1217.

\bibitem{Mancini} Mancini, G., Onofri-type inequalities for singular {L}iouville equations, J. Geom. Anal. 26 (2016), no. 2, 1202--1230.

\bibitem{Sch84} Schoen, R., Conformal deformation of a {R}iemannian metric to constant
              scalar curvature, J. Differential Geom. 20 (1984), no. 2, 479--495.

\bibitem{Struwe} Struwe, M., Variational method, vol. 34, Springer-Verlag, Berlin, 1996, xvi+272 pp. ISBN: 3-540-58859-0.

\bibitem{SZ2021} Sun, L., Zhu, J., Global existence and convergence of a flow to {K}azdan-{W}arner equation with non-negative prescribed function,
        Calc. Var. Partial Differential Equations 60 (2021), no. 1, Paper No. 42.

\bibitem{SZ2022+} Sun, L., Zhu, J., Existence of Kazdan-Warner equation with sign-changing prescribed function, arXiv:2012.12840.

\bibitem{Ta1996} Tarantello, G., Multiple condensate solutions for the {C}hern-{S}imons-{H}iggs
              theory, J. Math. Phys. 37 (1996), no. 8, 3769--3796.

\bibitem{Ta2010} Tarantello, G., Analytical, geometrical and topological aspects of a class of
              mean field equations on surfaces, Discrete Contin. Dyn. Syst. 28 (2010), no. 3, 931--973.

\bibitem{Tro1989} Troyanov, M., Metrics of constant curvature on a sphere with two conical
              singularities, Differential geometry ({P}e\~{n}\'{\i}scola, 1988), Lecture Notes in Math. 1410 (1989), 296--306.

\bibitem{Tro1991} Troyanov, M., Prescribing curvature on compact surfaces with conical singularities, Tran. Amer. Math. Soc. 324 (1991) no. 2, 793--821.

\bibitem{UY2000} Umehara, M., Yamada, K., Metrics of constant curvature {$1$} with three conical
              singularities on the {$2$}-sphere, Illinois J. Math. 44 (2000), no. 1, 72--94.

\bibitem{WY2022} Wang, Y., Yang, Y., A mean field type flow with sign-changing prescribed function on a symmetric {R}iemann surface,
               J. Funct. Anal. 282 (2022), no. 11, Paper No. 109449.

\bibitem{Yang16} Yang, Y., A {T}rudinger-{M}oser inequality on a compact {R}iemannian
              surface involving {G}aussian curvature, J. Geom. Anal. 26 (2016), no. 4, 2893--2913.

\bibitem{YZ17} Yang, Y., Zhu, X., A remark on a result of {D}ing-{J}ost-{L}i-{W}ang, Proc. Amer. Math. Soc. 145 (2017), no. 9, 3953--3959.

\bibitem{YZ2018} Yang, Y., Zhu, X., Existence of solutions to a class of {K}azdan-{W}arner equations on compact {R}iemannian surface,
               Sci. China Math. 61 (2018), no. 6, 1109--1128.

\bibitem{Zhu17} Zhu, X., A weak {T}rudinger-{M}oser inequality with a singular weight on a compact {R}iemannian surface, Commun. Math. Stat. 5 (2017), no. 1, 37--57.


\bibitem{Zhu19} Zhu, X., A generalized {T}rudinger-{M}oser inequality on a compact
              {R}iemannian surface with conical singularities, Sci. China Math. 62 (2019), no. 4, 699--718.


\bibitem{Zhu2022+} Zhu, X., Another remark on a result of {D}ing-{J}ost-{L}i-{W}ang, arXiv:2212.09943.

\bibitem{Z2020} Zhu, X., Rigidity of a family of spherical conical metrics, New York J. Math. 26 (2020), 272--284.



\end{thebibliography}
\end{document}